\long\def \forcecommand #1{\providecommand{#1}{}\renewcommand{#1}}
\newcommand*{\Root}{.}
\definecolor{dodgerblue}{rgb}{0.12, 0.56, 1.0}
\newtheoremstyle{define}{10pt}{0pt}{\itshape}{}{\bf}{.}{.5em}{}
\newtheoremstyle{exmp}{10pt}{0pt}{\itshape}{}{\bf}{.}{.5em}{}
\newtheoremstyle{rmrk}{10pt}{10pt}{}{}{\bf}{.}{.5em}{}
\newtheorem{theorem}{Theorem}[section]
\newtheorem{proposition}{Proposition}[section]
\newtheorem{corollary}{Corollary}[section]
\newtheorem{lemma}{Lemma}[section]
\newtheorem{remark}{Remark}[section]
\newtheorem{definition}{Definition}[section]
\theoremstyle{define}
\newtheorem{example}[theorem]{Example}
\DeclareFontFamily{U}{BOONDOX-calo}{\skewchar\font=45 }
\DeclareFontShape{U}{BOONDOX-calo}{m}{n}{
  <-> s*[1.05] BOONDOX-r-calo}{}
\DeclareFontShape{U}{BOONDOX-calo}{b}{n}{
  <-> s*[1.05] BOONDOX-b-calo}{}
\DeclareMathAlphabet{\calo}{U}{BOONDOX-calo}{m}{n}
\SetMathAlphabet{\calo}{bold}{U}{BOONDOX-calo}{b}{n}
\DeclareMathAlphabet{\calo}{U}{BOONDOX-calo}{b}{n}
\renewcommand{\thesubfigure}{\Alph{subfigure}}
\renewcommand\p@subfigure{\thefigure\,({\thesubfigure})\@gobble}
\title
[On the limits of topological data analysis for statistical inference]
{On the Limits of Topological Data Analysis\\ 
for Statistical Inference}
\author[Vishwanath]{Siddharth Vishwanath$^{1,*}$}\thanks{$^*$ Corresponding Author}
\email[Corresponding author]{svishwanath@ucsd.edu}
\address{$^1$University of California San Diego, 9500 Gilman Dr, La Jolla, CA 92093}
\author[Fukumizu]{Kenji Fukumizu$^2$}
\email{fukumizu@ism.ac.jp}
\author[Kuriki]{\\ Satoshi Kuriki$^2$}
\email{kuriki@ism.ac.jp}
\address{$^2$The Institute of Statistical Mathematics, 10-3 Midoricho, Tachikawa, Tokyo 190-8562, Japan}
\author[Sriperumbudur]{Bharath K. Sriperumbudur$^3$}
\email{bks18@psu.edu}
\address{$^3$The Pennsylvania State University, University Park, PA 16802, USA}
\begin{document}
\begingroup
\maketitle

\begingroup
\begin{abstract}
    Topological data analysis has emerged as a powerful tool for extracting the metric, geometric and topological features underlying the data as a multi-resolution summary statistic, and has found applications in several areas where data arises from complex sources. In this paper, we examine the use of topological summary statistics through the lens of statistical inference. We investigate necessary and sufficient conditions under which \textit{valid statistical inference} is possible using {topological summary statistics}. Additionally, we provide examples of models that demonstrate invariance with respect to topological summaries.
\end{abstract}%
\section{Introduction}\label{sec:intro}

Let $\Xn = \pb{\Xv_1, \Xv_2, \dots, \Xv_n}$ be a collection of points observed \iid{} at random from a probability distribution $\pr$ on {$\X \subseteq \R^d$}. The core objective of the statistical inference is to employ a test statistic $\T(\Xn)$ to infer meaningful information underlying the data-generating mechanism $\pr$. A good choice of the statistic $\T(\cdot)$ sheds light on some population quantity of interest, $\psi(\pr)$, which encodes the essential information underlying the sample $\Xn$. For instance, choosing $\T(\Xn) = (X_1 + X_2 + \dots + X_n)/n$ to be the sample mean sheds light on the population mean $\psi(\pr) = \int_{\X} \xv \cdot d\pr(\xv)$. In particular, a good choice of $\T(\cdot)$ for $\psi(\pr)$ enables a practitioner to test hypotheses $H_0: \pr \in \mathcal{P}_0$ vs. $H_1: \pr \in \mathcal{P}_1$ related to the data generating mechanism and to perform valid statistical inference.

Recent years have witnessed the availability of high-dimensional data from unconventional sources such as text and images, and it has become increasingly important to employ statistical summaries that encode the subtle features that underlie the data. To this end, topological data analysis (TDA) has emerged as an important tool for {extracting geometric and topological features underlying the data}. Summary statistics obtained using TDA are particularly attractive since they are multi-resolution summaries that encode the metric, geometric \textit{and} topological features underlying the data. Therefore, topological summary statistics, e.g., Betti numbers \citep{chung2019statistical}, Euler characteristic curves \citep{meng2022randomness}, persistence functions \citep{sorensen2020revealing} and persistence diagrams \citep{bendich2016persistent}, have been employed in statistical tests for data arising from complex sources such as neuroscience \citep{centeno2022hands}, cosmology \citep{adler2017modeling} and proteomics \citep{gameiro2015topological}. Despite the widespread adoption of TDA in data-analysis pipelines, a formal framework for statistical inference using these statistics is still limited. 

More precisely, topological summary statistics, represented as $\T: \X^n \rightarrow \mathcal{S}$, are measurable mappings (w.r.t. $\pr^{\otimes n}$) such that the data $\Xn$ is mapped to {an} element $\T(\Xn) \in \mathcal{S}$ in a suitable topological summary space. However, several topological summaries are challenging to analyze in a formal statistical context owing to the unusual mathematical structure of the summary space $\mathcal{S}$. For example, \cite{mileyko2011probability} show that the summary space for persistence diagrams is an Alexandrov space with non-negative curvature bounded from below. Working with topological representations on such summary spaces $\mathcal{S}$ are not amenable for employing classical tools of statistical inference owing to the bounded curvature of geodesics, non-uniqueness of Fr\'{e}chet means, and absence of any Hilbertian structure \citep{divol2019understanding,turner2014frechet}.

In contrast, well-behaved topological summaries\footnote{e.g., when $\mathcal S = \R^k$, or, more generally, when $\mathcal S$ is a Hilbert space or a Banach space.}---such as Betti numbers and persistent Betti numbers---have been studied extensively in a probabilistic context. Based on the central ideas from \cite{penrose2003random}, the large sample behavior of $\T(\Xn)$ branches into three qualitatively different regimes: the \textit{sparse}, \textit{thermodynamic} and \textit{dense} regimes which depend on the resolution at which the observations are examined relative to the number of samples $n \rightarrow \infty$. In particular, recent advances in the discipline have established the existence of limiting quantities $\mu(\pr)$ and $\Sigma(\pr)$ such that
\eq{
    \frac{\T(\Xn)}{n} \stackrel{p}{\rightarrow} \mu(\pr), %
    \quad {and} \quad %
    \frac{\T(\Xn) - \mu(\pr)}{\sqrt{n}} \stackrel{d}{\rightsquigarrow} \mathcal{N}\qty\big( \zerov, \Sigma(\pr) ),\nn
}
as $n \rightarrow \infty$ under a suitable asymptotic regime. For example, the law of large numbers and central limit theorem for the \textit{Betti numbers} of random geometric complexes is established in \cite{kahle2011random,kahle2013limit,Yogeshwaran_2015,yogeshwaran2017random,goel2019strong,trinh2017remark,bobrowski2015topology,bobrowski2014distance}. Similar results for the \textit{persistent Betti numbers} of random geometric complexes are established in \cite{duy2016limit,krebs2019asymptotic,can2020random}. In \cite{thomas2019functional,owada2020limit,krebs2021approximation} the topological summaries are treated as stochastic processes, and analogous convergence results are studied in the Skorohod metric.  These results establish that studying fundamental topological quantities in a random setting guarantee stability in a probabilistic sense, and pave the way to more detailed statistical investigation. In particular, \cite{fasy2014confidence,biscio2020testing,roycraft2020bootstrapping,dlotko2022topology} have provided methods for incorporating topological summaries in a formal statistical inference setting. Notwithstanding, it is tempting to ask:

\quad\emph{When are these topological summaries $\T(\Xn)$ meaningful for statistical inference?}

The \textit{central objective} of this work is to investigate a variant of this question: \textit{when are the topological summaries insufficient for statistical inference?} To this end, we characterize conditions under which the limiting distribution of $\T(\Xn)$ fails to be injective. 

\subsection{Contributions}\label{contributions}

In the deterministic setting, even without imposing the probabilistic structure, the injectivity of metric spaces via topological transforms is largely an open problem \cite{oudot2018inverse}. Therefore, we make a foray into this question using a slightly simplified approach. Given $\Xn$ sampled \iid{} from $\pr$ where $\supp(\pr) = \X \subset \R^D$ and $\dim(\X) = d \le D$, and for $0 \le s, t \le \infty$ the topological summary we investigate are the \textit{persistent Betti numbers}, $\beta^{s, t}_k\qty\big(\K\qty\big({n^{1/d}\Xn}))$ for each $0 \le k \le d$, i.e.,
\eq{
    \T(\Xn) = \qty\Big( \beta^{s, t}_0\qty\big(\K({n^{1/d}\Xn})), \beta^{s, t}_1\qty\big(\K({n^{1/d}\Xn})), \dots, \beta^{s, t}_d\qty\big(\K({n^{1/d}\Xn})) ) {\in \Z_+^d},\nn
}
where, as outlined in Section~\ref{sec:background}, $\K(n^{1/d}\Xn)$ corresponds to the \v{C}ech complex constructed using $\Xn$ in the thermodynamic regime. Examining the behavior of the Betti numbers collectively serves as a stepping-stone to understanding the behavior of more complex topological invariants in the context of persistent homology.

Although the exact {sampling} distribution for $\T(\Xn)$ is difficult to characterize, we can investigate its usefulness in the following asymptotic sense. If $\T(\pr)$ denotes the limiting distribution of $n^{-1/2} \T(\Xn)$ {i.e., $$n^{-1/2} \T(\Xn) \stackrel{d}{\rightsquigarrow} \T(\pr),$$} in this paper we investigate conditions under which the map $\pr \mapsto \T(\pr)$ fails to be injective, i.e., for two different probability distributions $\pr\neq\qr$, we have that $\T(\pr)=\T(\qr)$. 

At this point, it becomes instructive to consider a class of distributions $\P = \pb{\pr_\theta : \theta \in \Theta}$ indexed by a parameter set $\Theta$. A {statistic} $\T\pa{\Xn(\theta)}$ is said to be {sufficient} for the model $\P$ when the probabilistic and statistical information underlying the observations $\Xn(\theta) \sim \pr_\theta$ is faithfully encoded in the statistic $\T(\Xn(\theta))$, and plays a central role in statistical inference\footnote{{Specifically, $\T(\Xn)$ is sufficient for the model $\P$ if the joint distribution of $\Xn(\theta)$ admits the factorization $f_{\Xn}(\xv_{1:n}; \theta) = h(\xv) \cdot g(\theta, \T(\xv_{1:n}))$}}. The injectivity of $\T$ is closely related to the notion of \textit{sufficiency} of $\T$. In particular, when injectivity fails, the limiting distribution {of} $\T(\Xn(\theta))$ ultimately provides no information about the parameter $\theta$ underlying $\Xn(\theta)$. This qualitative behavior, which is complementary to the notion of sufficiency, is called \textit{ancillarity}; our results can be viewed as characterizing conditions for $\P$ under which the topological summary statistic $\T(\Xn(\theta))$ is \textit{asymptotically ancillary} for the model $\mathcal{P}$.\smallskip

Our main contributions are the following:

\begin{enumerate}[label=(\Roman*)]
\item We introduce the notion of \betaequivalence{} which \mbox{characterizes} distributions for which the mapping $\T$ fails to be injective. We {examine} conditions under which a parametric class of distributions $\P = \pb{\pr_\theta: \theta \in \Theta}$ admits \betaequivalence{}, by introducing an alternate equivalence relationship, called \Fequivalence{}, whereby distributions satisfying \Fequivalence{} are also guaranteed to satisfy \betaequivalence{} (Lemma~\ref{lem:inclusion}). As a consequence, when $\pr$ and $\qr$ admit \Fequivalence{}, any statistical test for distinguishing between $\Xn \sim \pr$ and $\Ym \sim \qr$ using $\T(\Xn), \T(\Ym)$ has vanishing power as {$n, m \rightarrow \infty$}.

\item By imposing an algebraic structure on $\P$, and using the notion of \emph{group maximal invariance} \citep{wijsman1990invariant, eaton1989group}, in Theorem \ref{thm:group_invariance}, we provide the necessary and sufficient conditions for the family $\mathcal{P}$ to admit \Fequivalence{}. We illustrate this result through some supporting examples in Section~\ref{sec:invariance2}.

\item Next, in Theorem~\ref{thm:nonlinear_invariance} we relax the algebraic assumptions on $\P$ and investigate sufficient conditions for \Fequivalence{} when the underlying space $\X$ admits a smooth fiber bundle structure. In contrast to (II), our method here is more constructive and is illustrated through several examples in Section~\ref{sec:invariance3}.

\item Lastly, in Theorem \ref{thm:orthogonal}, we present {a necessary} and sufficient condition for \Fequivalence{} to hold based on the geometry of the score function $\nabla_\theta \log f_\theta(\xv)$. In contrast to (II) and (III), this result provides a simple method to verify if a given class of distributions, $\P$, admits \Fequivalence{}.
\end{enumerate}

The rest of the paper is organized as follows. In Section \ref{sec:background}, we provide a background on the probabilistic, topological and statistical tools needed. Sections~\ref{sec:injectivity},~\ref{sec:invariance2}~and~\ref{sec:invariance3} contain the main results, and missing proofs are collected to Section~\ref{sec:proofs}. In Section \ref{sec:discussion} we discuss possible extensions and collect the supplementary results in the Appendix.
\section{Background}\label{sec:background}

In this section, we provide background on probabilistic, topological, and statistical tools needed for the rest of the paper. We use $\Xv, \Yv, \Zv$ to denote random variables from a probability space $\pa{\Omega,\mathcal F,\pi}$ taking values in a measurable space $\pa{\mathcal{X},\mathcal{B}\pa{\mathcal{X}}}$, and we use $\Xv \perp \Yv$ to indicate that $\Xv$ and $\Yv$ are independent. We assume that the space on which the random values are observed, $\X$ is sufficiently regular, i.e., either $\X \subset \R^d$ contains an open subset of $\R^d$, or $\X$ is a compact \mbox{$\mathcal{C}^1$-manifold} of dimension $d<D$.  For all probability distributions $\pr$, defined on $\X$, we assume the existence of a Radon-Nikodym derivative $f = {d\pr}/{d\mu}$ w.r.t.~a canonical choice of the base measure on $\mu$ on $\X$, e.g., the $d$-dimensional Lebesgue measure, denoted $\lambda_d$, when $\X$ contains an open subset of $\R^d$. We further assume that $f \in L^p\pa{\X, \mu}$ for all $1 \le p < \infty$. The Jacobian of a differentiable function $f$ is denoted by $\mathbf{D} f$ and $\norm{\mathbf D f} \defeq \abs{ \det\qty(\mathbf{D}f)}$ denotes the absolute value of its determinant.


\subsection{Betti Numbers and Persistent Homology}
\label{sec:ph}

Given a collection of observations $\mathbb{X}_n = \pb{\xv_1,\xv_2, \dots, \xv_n}$ in a metric space $\pa{\mathcal{X},d}$, and for a given spatial resolution $r>0$, the topology underlying the points at resolution $r$ is encoded in a simplicial complex $\mathcal{K}\pa{\mathbb{X}_n,r} \subseteq 2^{\Xn}$. The simplicial complex can be constructed in several ways (see, for example, Edelsbrunner \& Harer \cite{edelsbrunner2010computational}). In particular, the \text{\v{C}ech complex} is given by 
\eq{
{\mathcal{K}\pa{\mathbb{X}_n,r} \defeq \pb{\sigma \subseteq \Xn : \bigcap_{\xv \in \sigma}B\pa{\xv, r} \neq \varnothing}},
\label{eq:cech}
}

where {$B(\xv, r) = \pb{\yv \in \R^D: \norm{\xv - \yv} < r}$}. For $0 \le k \le d$, the $k^{th}$-\textit{homology} of a simplicial complex $\mathcal{K}$, given by $H_k\pa{\mathcal{K}}$ is an algebraic object encoding its topology as a vector-space {(over a fixed field $\mathsf{k}$, typically taken to be $\mathbb Z_2$; \citealp{hatcher2005algebraic})}. Using the Nerve lemma \citep{borsuk1948imbedding}, $H_k\pa{\mathcal{K}\pa{\mathbb{X}_n,r}}$ is isomorphic to the homology of its union of $r$-balls, $H_k\pa{\bigcup_{i=1}^{n}B\pa{\xv_i, r}}$. The $k^{th}$-\textit{Betti number} is defined as
\eq{
\beta_k\pa{\mathcal{K}\pa{\mathbb{X}_n,r}} \defeq \dim\pa{H_k\pa{\mathcal{K}\pa{\mathbb{X}_n,r}}}.\nonumber
}
It counts the number of $k$-dimensional voids or {\textit{non-trivial $k$--cycles} in $\mathcal{K}\pa{\mathbb{X}_n,r}$}. The ordered sequence $\pb{\mathcal{K}\pa{\mathbb{X}_n,r}}_{r > 0}$ forms a \textit{filtration}, encoding the evolution of topological features over a spectrum of resolutions. For $0<s<t$, $\bigcup_{i=1}^{n}B\pa{\xv_i, s} \subset \bigcup_{i=1}^{n}B\pa{\xv_i, t}$ and the simplicial complex $\mathcal{K}\pa{\mathbb{X}_n, s}$ is a \textit{sub-simplicial complex} of $\mathcal{K}\pa{\mathbb{X}_n, t}$. Their homology groups are associated with the induced linear map 
$$\iota_s^t:H_k\pa{\mathcal{K}\pa{\mathbb{X}_n, s}} \hookrightarrow H_k\pa{\mathcal{K}\pa{\mathbb{X}_n, t}}$$ 
and the $k^{th}$ order \textit{$(r,s)$-persistent Betti number}, given by 
$$
\beta_k\pa{\K(\mathbb{X}_n, s, t)} = \textup{rank}\pa{\iota_s^t},
$$
counts the number of non-trivial cycles which are born at or before $s$ and have a death after $t$. The $k^{th}$-{persistence diagram}, denoted by $\dgm_k\pa{\mathbb{X}_n}$, {is defined as the collection} $\{(b_i,d_i)\}_i$ of birth-death pairs associated with the non-trivial cycles from the filtration. We refer the reader to \cite{hatcher2005algebraic,edelsbrunner2010computational} for a comprehensive introduction.

\subsection{Asymptotic Regimes}
\label{sec:prob}

A point process $\Phi$ is a locally finite, random counting measure defined on the measurable space $\pa{\Omega,\mathcal F}$ whose random elements take values in $\mathcal X$, and for a $B \in \mathcal{B}\pa{\mathcal{X}}$, the random variable $\Phi\pa{B}$ measures the number of elements of $\Phi$ in $B$. $\Xn$ {is called a Binomial point process} if $\Xn = \pb{\Xv_1,\Xv_2,\dots,\Xv_n}$ is \iid{} with distribution $\pr$. The Binomial point process can equivalently be represented by the random measure $\Phi_n = \sum_{i=1}^{n}{\delta_{\Xv_i}}$, from which it follows that $\Phi_n(B) = \textup{Bin}(n, \pr(B))$ for all $B \in \mathcal{B}\pa{\mathcal{X}}$.

In Section \ref{sec:ph}, the collection of points $\pb{\xv_1, \xv_2, \dots, \xv_n}$ were fixed points from a space $\pa{\mathcal{X},d}$. The probabilistic setting deals with a random collection of points ${\Xn = \pb{\Xv_1,\Xv_2, \dots, \Xv_n}}$. The analysis of the asymptotic behavior of the topological quantities depends on how the radii of the balls for the \v{C}ech complex, $r_n$, decays relative to $n$. If the radii $r_n$ decay too quickly, then the associated simplicial complexes fail to recognize the higher dimensional simplices, resulting in sparsely connected points. On the flip side, if the radii $r_n$ decay too slowly, then all the points become densely connected. At a critical rate of decay for $r_n$ one can observe a phase transition. {This is illustrated in Figure \ref{fig:regime}.}

\begin{figure}
  \centering
  \begin{subfigure}{0.32\textwidth}
    \centering
    \includegraphics[width=\textwidth]{\Root/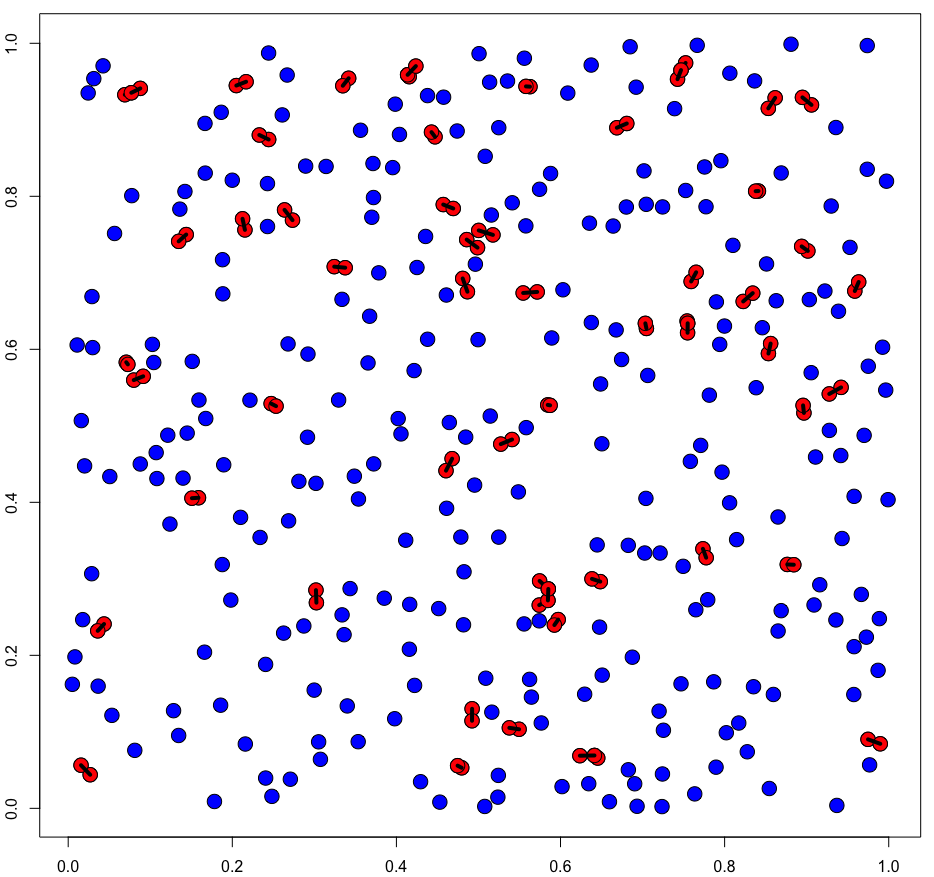}
    \caption{Sparse regime}
  \end{subfigure}
  \begin{subfigure}{0.32\textwidth}
    \centering
    \includegraphics[width=\textwidth]{\Root/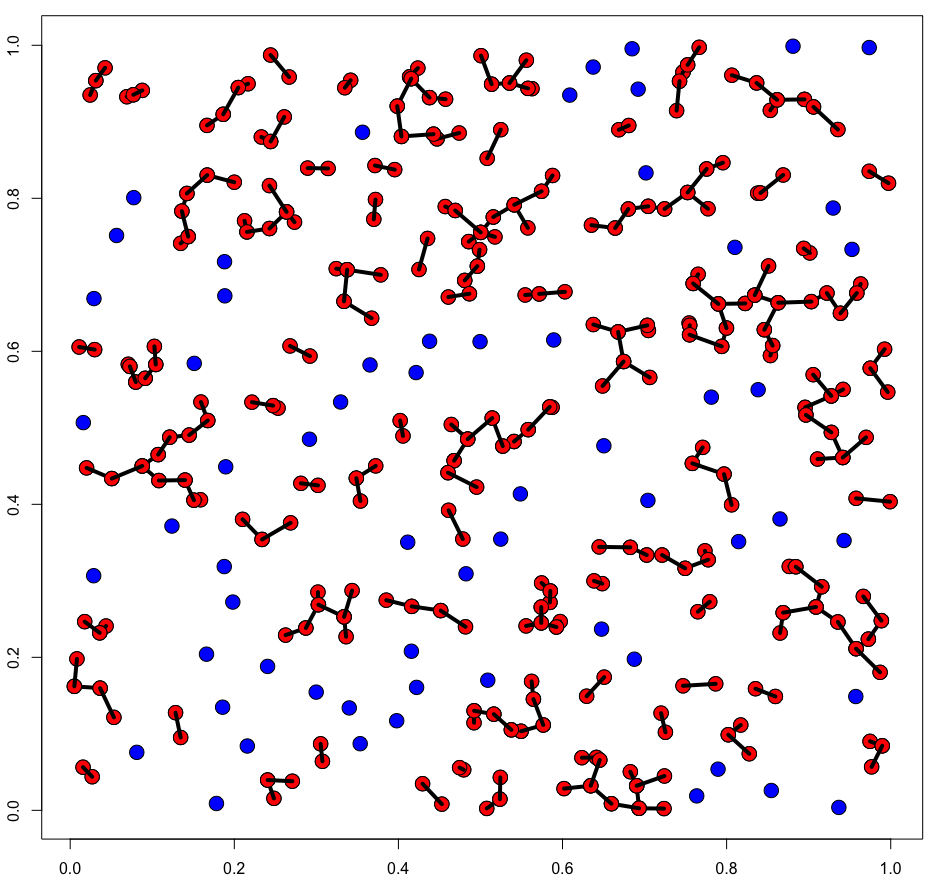}
    \caption{Thermodynamic regime}
  \end{subfigure}
  \begin{subfigure}{0.32\textwidth}
    \centering
    \includegraphics[width=\textwidth]{\Root/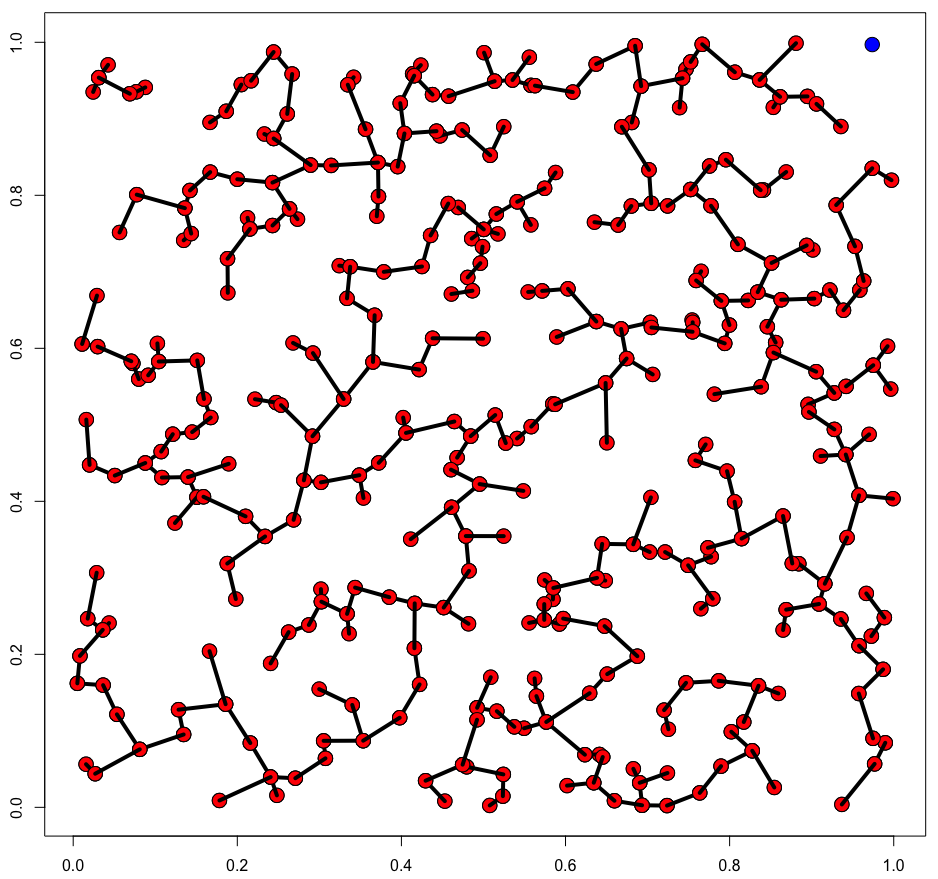}
    \caption{Dense regime}
  \end{subfigure}    
  \caption{Illustration of different asymptotic regimes}
  \label{fig:regime}
\end{figure}

Formally, the \textit{thermodynamic} regime corresponds to the case when the expected number of points inside a ball of radius $r_n$ has constant order, i.e., {if $\pr \ll \lambda_d$ then}
\eq{
\E\big(\Phi(B(\xv, r_n)\big)\big) = n \pr( \Xv \in B(\xv, r_n) ) \propto n r_n^d {\longrightarrow} c \qq{as} {n \rightarrow \infty}, \nonumber
}
whenever $r_n = c {n^{-{1}/{d}}}$. Faster rates of decay, $r_n = o({n^{-{1}/{d}}})$, corresponds to the \textit{sparse} regime, and slower rates of decay, $r_n = \omega({n^{-{1}/{d}}})$, corresponds to the \textit{dense} regime. 

From the definition of the \v{C}ech complex in \eref{eq:cech}, it is easy to see that in the thermodynamic regime $r_n = t n^{-1/d}$, $\K(\Xn, r_n)$ is equivalently obtained by holding the radius fixed at $t$ and rescaling the original points to get $\K(n^{1/d}\Xn, t)$. Specifically, taking $r_n = t n^{-1/d}$ if $\sigma \subset \Xn$ is a $k-$simplex of $\K(\Xn, r_n)$ then for $n^{1/d}\sigma \subset n^{1/d}\Xn$ it follows that
\eq{
  \bigcap_{\yv \in n^{1/d}\sigma}B\pa{\yv, t} \neq \varnothing \quad \Leftrightarrow\quad \bigcap_{\xv \in \sigma}B\pa{\xv, r_n} \neq \varnothing,\nn
}
and, therefore, $\K(\Xn, r_n) \simeq \K(n^{1/d}\Xn, t)$ {is a simplicial isomorphism}. Similarly, when $r_n, R_n$ are such that $r_n = s n^{-1/d}$ and $R_n = t n^{1/d}$ in the thermodynamic regime, the $(r_n, R_n)$-persistent Betti numbers are equivalently given by 
\eq{
  \beta_k^{s, t}\qty( \K(n^{1/d}\Xn) ) \defeq \beta_k\qty( \K(n^{1/d}\Xn, s, t) ) \equiv \beta_k\qty( \K(\Xn, r_n, R_n) ).\nn
}

\section{Injectivity of $\T(\pr)$, \betaequivalence{} and \Fequivalence{}}
\label{sec:injectivity}

We begin by describing the setting in which we examine injectivity. Let $\Xn = \pb{\Xv_1,\Xv_2, \dots, \Xv_n}$ be sampled \iid{} from a probability distribution $\pr$ with density $f$ on $\X \subseteq \R^D$. The topological summary we are interested in {is the collection of persistent Betti numbers} for the random \v{C}ech complexes in the thermodynamic regime,
\eq{
\T\pa{\Xn} \defeq \Big(\beta_0^{s, t}( \K(n^{1/d}\Xn) ), \beta_1^{s, t}( \K(n^{1/d}\Xn) ), \dots , \beta_d^{s, t}( \K(n^{1/d}\Xn) )\Big),\nonumber
}
for $0 \le s < t$. The thermodynamic limit for each $\beta_k\pa{\mathcal{K}\pa{\Xn,r_n}}$, $0 \le k \le d$ has been established in \cite{kahle2013limit,yogeshwaran2017random,trinh2017remark,goel2019strong,krebs2019asymptotic,can2020random}. In particular, we rely on the following characterization of the thermodynamic limit found in \citep[Proposition~3.1~\&~Theorem~4.5]{krebs2019asymptotic} which follows from a slight restatement of the conditions in \cite[Theorem~1.1]{goel2019strong}. 

\begin{proposition}[{\citealp[Theorem~1.1]{goel2019strong}; \citealp[Proposition~3.1~\&~Theorem~4.5]{krebs2019asymptotic}}]
  {Let $\Xn \subset \X$ sampled \iid{} from $\pr$ with density $f$, such that $f \in L^p\pa{\mu, \X}$ for all $1 \le p < \infty$. {For fixed $0 \le s < t \le \infty$}, there exist fixed functions $\gamma_k, \varsigma_k$ depending only on $k$ such that
  \eq{
  \lim_{n \rightarrow \infty}\f{\beta^{s, t}_k\qty(\K(n^{1/d}\Xn))}{n} \defeq \mu_k(\pr;s, t) = \int\limits_{\X}{\gamma_k\pa{f(\xv)^{1/d}s, f(\xv)^{1/d}t} f(\xv) d\xv},\nonumber
  }
  and
  \eq{
  \frac{\beta^{s, t}_k\qty(\K(n^{1/d}\Xn)) - \mu_k(\pr;s, t)}{\sqrt{n}} \stackrel{d}{\rightsquigarrow} \mathcal{N}(0, \sigma_k^2(s, t)),\nn
  }
  where $\sigma_k^2(s, t) \defeq \int_\X\varsigma_k(f(\xv)^{1/d}s, {f(\xv)^{1/d}t}) f(\xv) d\xv$.
  }
  \label{prop:stronglaw}
\end{proposition}

\begin{remark}
  While we focus on Betti numbers associated with random \cech{} complexes in the thermodynamic regime, as noted in \citet[Section 1.3]{Yogeshwaran_2015}, the results in this paper will extend to the Vietoris-Rips, alpha, witness and cubical complexes as well. For example, the probabilistic results in \citet[Theorem 1.1]{goel2019strong} and \citet[Theorem 3.3]{trinh2017remark} hinge on establishing moment bounds using the topological additivity property in \citet[Lemma 2.2]{goel2019strong} and \cite[Lemma 2.1]{trinh2017remark} respectively---which extend to the aforementioned simplicial complexes. In particular, \citet[Section~4.2]{roycraft2020bootstrapping} describes some general conditions an abstract simplicial complex needs to satisfy for the results to hold. 
\end{remark}

A key observation is to note that the limiting quantities in Proposition \ref{prop:stronglaw} can equivalently be written as the statistical functionals
\eq{
\mu_k(\pr;s, t) &\defeq \int_{\X}{\gamma_k\pa{f(\xv)^{1/d}s, f(\xv)^{1/d}t} f(\xv) d\xv},\nonumber\\[0pt]
\sigma^2_k(\pr; s, t) &\defeq \int_\X\varsigma_k(f(\xv)^{1/d}s, {f(\xv)^{1/d}t}) f(\xv) d\xv\nn,
}
where $f$ is the density associated with $\pr$. Therefore, the thermodynamic limit, ${\beta_k(\pr; s, t) \defeq \qty\big(\mu_k(\pr;s, t),\ \sigma^2_k(\pr; s, t))}$, encodes the limiting behavior of $n^{-1/2}\beta_k^{s, t}(\K(n^{1/d}\Xn))$.

With this background, we define the notion of $\beta_k$-equivalence under which two distributions $\pr, \qr$ admit the same thermodynamic limit $\beta_k(\pr; s, t)$ for all $0 \le s < t$. We are interested in families of distributions which admit $\beta_k$-equivalence for each $k$, i.e., {$\beta_k(\pr;s, t) = \beta_k(\qr;s, t)$} for each $k\ge 0$. We call such a family of distributions \betaequivalent{}.

\begin{definition}[$\beta$-equivalence]
  Two distributions $\pr, \qr$ are said to admit $\beta_k$-{equivalence} if $\beta_k(\pr;s, t) = \beta_k(\qr;s, t)$ for all $0 \le s < t$. Furthermore, $\pr$ and $\qr$ are said to admit \betaequivalence{}, denoted $\pr \beq \qr$, if $\pr$ and $\qr$ admit $\beta_k$-{equivalence} for all $k \ge 0$. Moreover, a family of distributions $\P$ admits \betaequivalence{} if $\pr \beq \qr$ for all $\pr, \qr \in \P$.
  
\end{definition}

Establishing \betaequivalence{} for distributions directly is infeasible because the exact forms of the quantities $\gamma_k$ and $\varsigma_k$, as described in Proposition~\ref{prop:stronglaw}, are typically unknown. To circumvent this challenge, we introduce an alternative equivalence, termed \Fequivalence{}.

\begin{definition}[\Fequivalence{}]\label{def:Fequivalence}
  Consider two probability distributions $\pr, \qr$ with probability density functions $f$ and $g$, respectively. Let $\Xv \sim f$ and $\Yv \sim g$ be two random variables with distribution $\pr$ and $\qr$, respectively. Then, $\pr$ and $\qr$ are said to be \Fequivalent{}, denoted $\pr \Feq \qr$ (equivalently, $f \Feq g$) if $f(\Xv) \distas g(\Yv)$. A family of distributions $\P$ is said to admit \Fequivalence{} if $\pr \Feq \qr$ for all $\pr, \qr \in \P$.
\end{definition}

The following result establishes the relationship between distributions that admit \Fequivalence{} and those that admit \betaequivalence{}. 

\begin{lemma}
  If $\P$ is a family of probability distributions that admit \Fequivalence{}, then $\P$ also admits \betaequivalence{}. 
  \label{lem:inclusion}
\end{lemma}

In other words, \Fequivalence{} is a sufficient condition for distributions to admit \betaequivalence{}, i.e., distributions satisfying \Fequivalence{} are also guaranteed to satisfy \betaequivalence{}.

\setlist[enumerate]{leftmargin=2em,itemindent=2em,parsep=5pt,itemsep=7pt}

\begin{remark}
  We make the following observations regarding distributions which admit \Fequivalence{}.
  \begin{enumerate}[leftmargin=0pt,label=\textup{(\roman*)}]
    \item Let $\P(\Theta) = \pb{\ptheta : \theta \in \Theta}$ be a parametric family of distributions. A statistic $\T(\Xn)$ is an \textit{ancillary statistic} for the model $\P(\Theta)$ if the distribution of $\T(\Xn)$ (w.r.t. $\pr_\theta^{\otimes n}$) does not depend on $\theta$. In a similar vein, $\T(\Xn)$ is \textit{approximately ancillary} for the model if the limiting distribution $\T(\pr_\theta)$ does not depend on $\theta$ \cite[Chapter~6.6]{severini2000likelihood}. This provides the following interpretation of \Fequivalence{}: if $\P(\Theta)$ admits \Fequivalence{}, then the topological summaries are approximately ancillary statistics for the model. 
    
    Given a fixed model $\P(\Theta)$, in general, there are no constructive techniques for determining ancillary statistics \citep{lloyd2004ancillary}. Our objective here is, however, somewhat complementary; given a fixed statistic $\T(\Xn)$, we investigate conditions under which the model $\P(\Theta)$ admits $T(\Xn)$ as an approximate ancillary statistic. 
    
    \item Notably, if we are given observations $\Xn(\theta) \sim \pr_\theta$ from a \betaequivalent{} family $\P(\theta)$, any level-$\alpha$ hypothesis test for $H_0: \theta \in \Theta_0$ vs. $H_1: \theta \in \Theta_1$ with $\Theta_0 \cap \Theta_1 = \varnothing$ using $\T(\Xn)$ as a test statistic will have vanishing power. This is a simple consequence of the fact that, in the limit, for any rejection region $B \in \scr{B}(\R^d)$,
    \eq{
    \pr_{_{\!\!H_1}}\qty\big(\T(\Xn) \in B)
    &\le \pr_{_{\!\!H_0}}\qty\big(\T(\Xn) \in B) + \abs\Big{\pr_{_{\!\!H_1}}\qty\big(\T(\Xn) \in B) - \pr_{_{\!\!H_0}}\qty\big(\T(\Xn) \in B )}\nn\\
    &\le \alpha + o(1),\nn
    }
    where the last inequality follows from the fact that the  $\alpha$ is the fixed type-I error rate and the second term follows from the fact that, due to \Fequivalence{}, as $n \rightarrow \infty$ the limiting distributions of $\T(\Xn)$ under the null and alternate hypothesis are the same, i.e., $\T(\pr_{\theta_0}) \distas \T(\pr_{\theta_1})$ for all $\theta_0 \in \Theta_0$ and $\theta_1 \in \Theta_1$.
    
    We, however, emphasize that the approximate ancillarity of topological summaries $\T(\Xn)$ for such models does not invalidate their usefulness in statistical inference; it only limits their ability to be used as the primary test statistic when the model admits \Fequivalence{}. In fact, ancillary statistics play a pivotal role in deriving efficient and optimal testing procedures in the framework of conditional inference \citep{efron1978assessing,ghosh2010ancillary}.
    
    \item Studying injectivity for Betti numbers collectively serves as a stepping-stone to understanding the behavior of several other topological invariants, e.g., the \textit{Euler characteristic} is an important topological invariant and is given by the alternating sum of Betti numbers,
    \eq{
    \chi\pa{\mathcal{K}\pa{\Xn,r}} \defeq \sum_{k=0}^{d}(-1)^k \beta_k\pa{\mathcal{K}\pa{\Xn, r}}.
    \label{eq:euler}
    }
    Unlike Betti numbers, the asymptotic behavior of the Euler characteristic of a random \cech{} complex exhibits interesting phenomena only in the thermodynamic regime \cite[Corollary 4.2]{bobrowski2014distance}, i.e., 
    $$
    \T_{\chi}(\Xn) = n^{-1/2}\chi\pa{\mathcal{K}({\Xn, tn^{-1/d}})}.
    $$ 
    By noting that $\beta_k(\K(\Xn, tn^{-1/d})) = \beta_k^{t, t}(\K(n^{1/d}\Xn))$ in \eref{eq:euler} and invoking the continuous mapping theorem, it is easy to see that families of distributions which admit \Fequivalence{} in the thermodynamic regime will also admit identical asymptotic behavior of the Euler characteristic. In particular, this shows that the goodness-of-fit test proposed in \cite{dlotko2022topology} will fail to distinguish between \betaequivalent{} point processes.
    
    \item Furthermore, note that the persistence diagram, which (informally) is given by
    $$
    \dgm_k(n^{1/d}\Xn)\! = \!\!\qty\Big{(b_i, d_i) : \exists\textup{$k$-dimensional feature with birth time $b_i$ \& death time $d_i > b_i$} },$$ 
    can be represented by a locally finite Radon measure (\citealp{duy2016limit,divol2019understanding})
    \eq{
    \Psi[\dgm(n^{1/d}\Xn)] = \sum_{(u, v) \in \dgm_k(n^{1/d}\Xn)}\!\!\!\!\!\delta_{(u,v)}.\nn
    }
    {Denoting by $B_{s, t} = [0, s] \times [t, \infty)$ } the rectangular set in the {upper-left half-plane}, the persistent Betti numbers are equivalently given by
    \eq{
    \beta_k^{s, t}(\K(n^{1/d}\Xn)) = \Big\langle{\mathbbm{1}({B_{s, t}}), \dgm_k(n^{1/d}\Xn)}\Big\rangle =\!\!\!\!\!\!\!\!\! \int\limits_{0 \le u < v \le \infty}\!\!\!\!\!\!\!\!\mathbbm{1}\qty\big( (u, v) \in B_{s, t} ) \  d\Psi\qty\big[\dgm(n^{1/d}\Xn)](u, v).\nn
    }
    As a direct consequence, \Fequivalence{} also implies that the distribution of $\inner{\phi, \dgm_k(n^{1/d}\Xn)}$ is the same for all $\pr \in \P$ and for all piecewise constant functions $\phi(\cdot)$ on the space of persistence diagrams. 
  \end{enumerate}
  \label{remark:betaequivalence}
\end{remark}

We conclude this section with a numerical illustration to demonstrate the topological inference in a family of distributions admitting \Fequivalence{} (and, thereby, also \betaequivalence{}).


{

\begin{example}\label{ex:biv-chi}

  \begin{figure}
    \centering
    \begin{subfigure}[T]{0.32\linewidth}
      \includegraphics[width=\linewidth]{\Root/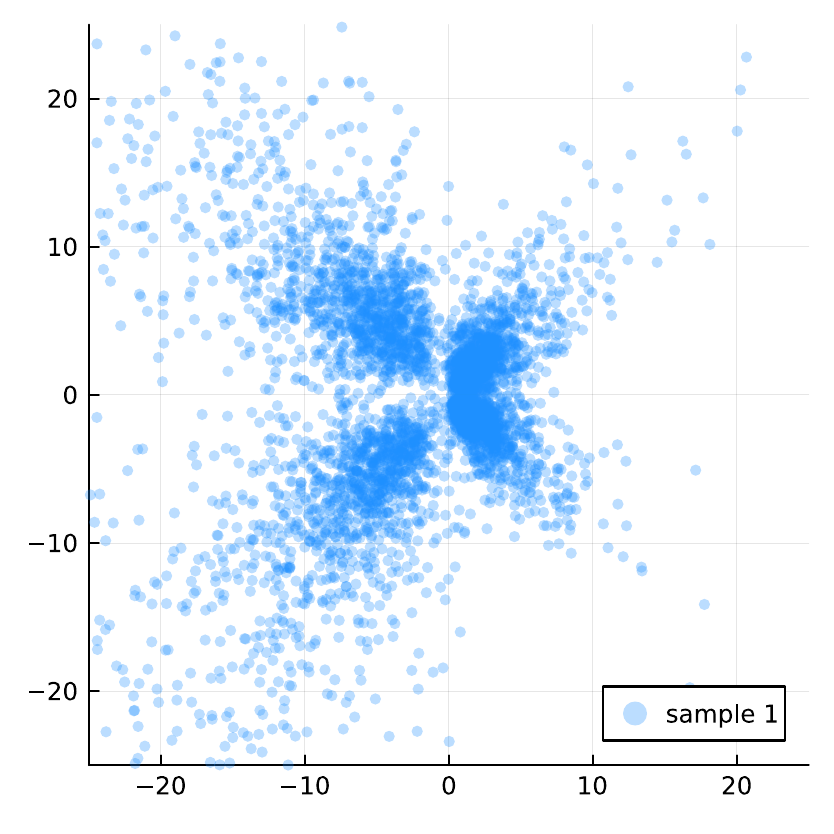}
      \caption{$\thetav = (0.46, 0.47, 0.03, 0.04)$}\label{fig:biv-chi-1-a}
    \end{subfigure}
    \begin{subfigure}[T]{0.32\linewidth}
      \includegraphics[width=\linewidth]{\Root/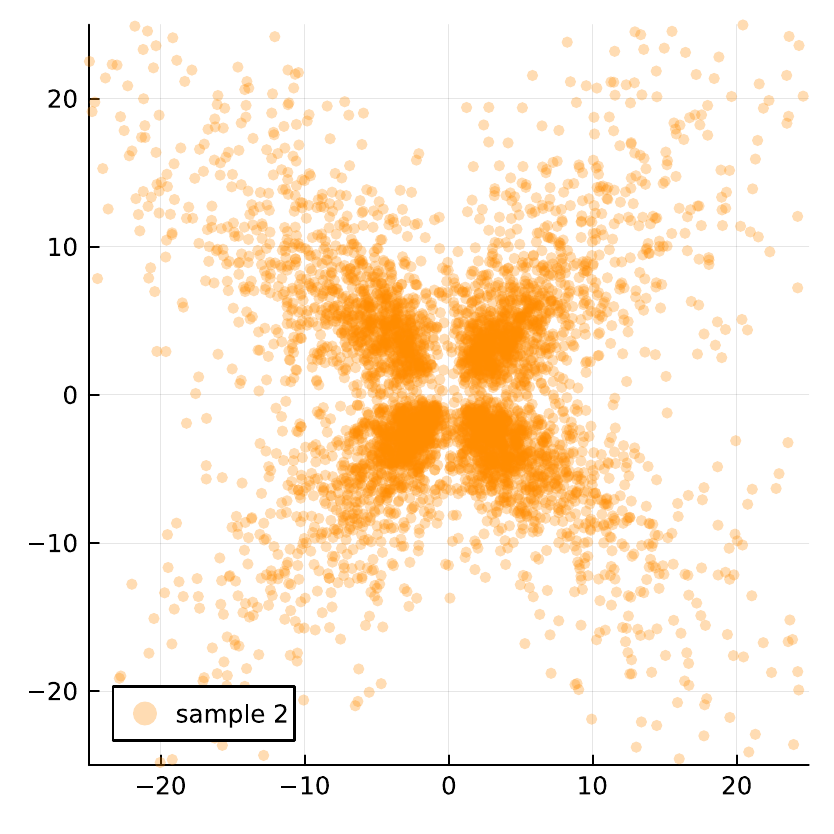}
      \caption{$\thetav = (0.17, 0.29, 0.21, 0.24)$}\label{fig:biv-chi-1-b}
    \end{subfigure}
    \begin{subfigure}[T]{0.34\linewidth}
      \includegraphics[width=\linewidth]{\Root/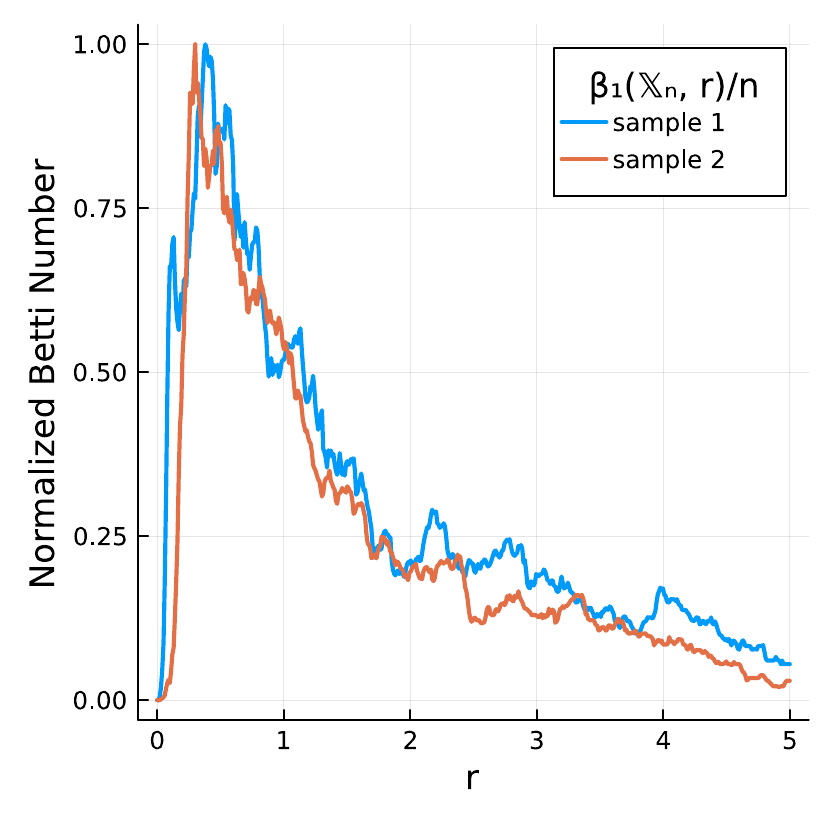}
      
      \vspace*{-1em}\caption{$r \mapsto \beta_1(\Xn, r)/n$}\label{fig:biv-chi-1-c}
    \end{subfigure}
    \caption{Scatterplot and Betti curve for point clouds $\Xn$ obtained from two different distributions in $\pb{\fthetav: \thetav \in \Theta}$. (Left) $\Xn \sim \fthetav$ for $\thetav = (0.46, 0.47, 0.03, 0.04)$ in blue, (Center) $\Xn \sim \fthetav$ for $\thetav = (0.17, 0.29, 0.21, 0.24)$ in orange, and (Right) the (normalized) Betti curve $r \mapsto \beta_1(\Xn, r)/n$ for the two point clouds.}
    \label{fig:biv-chi-squared-1}
  \end{figure}

  \begin{figure}
    \centering
    \begin{subfigure}[T]{0.48\linewidth}
      \includegraphics[width=\linewidth]{\Root/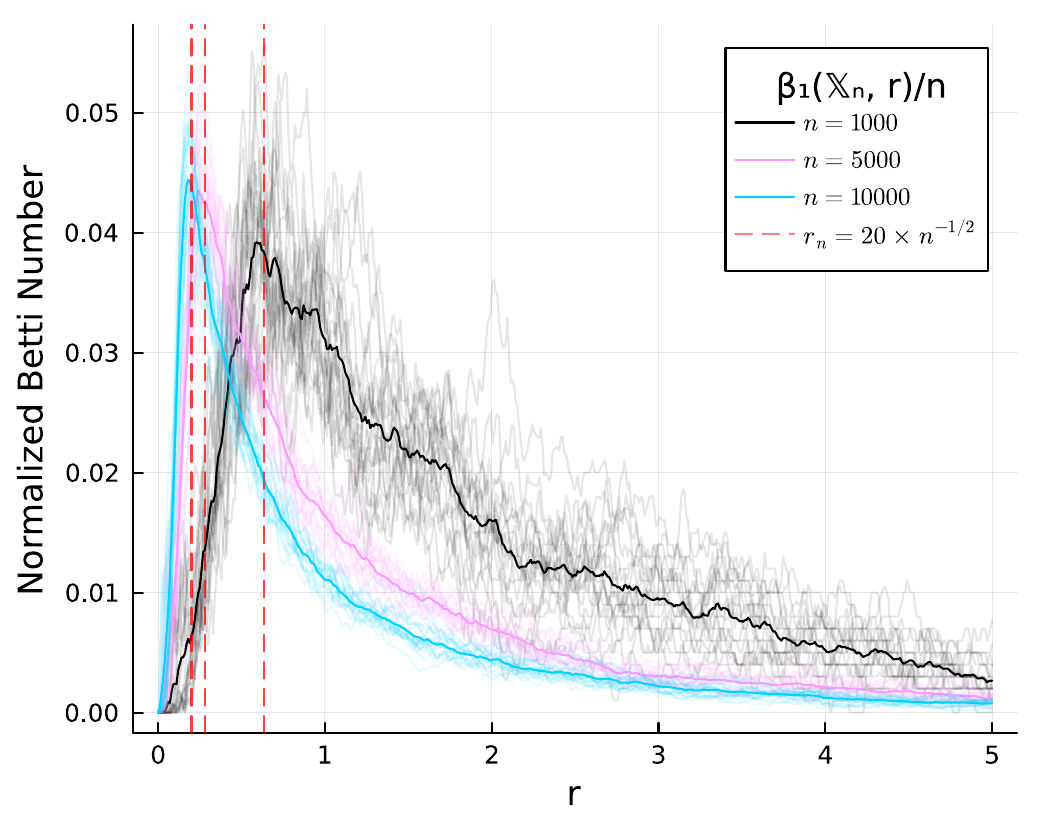}
      \caption{$r \mapsto \beta_1(\Xn, r)/n$ for varying $\thetav$ but fixed $n$.}
      \label{fig:biv-chi-2-a}
    \end{subfigure}
    \begin{subfigure}[T]{0.48\linewidth}
      \includegraphics[width=\linewidth]{\Root/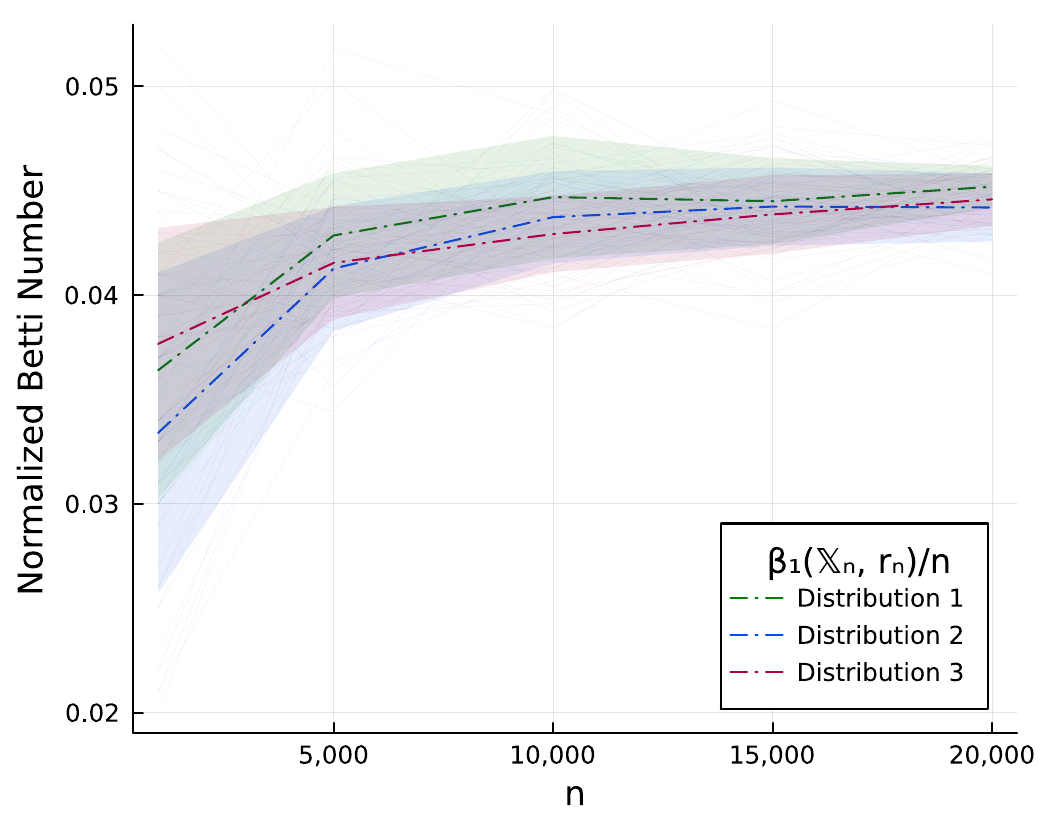}
      \caption{$n \mapsto \beta_1(\Xn, r_n)/n$ for fixed $\thetav$ and varying $n$.}
      \label{fig:biv-chi-2-b}
    \end{subfigure}
    \caption{Betti curves and the Betti numbers in the thermodynamic regime for $\pb{\fthetav: \thetav \in \Theta}$ from Example~\ref{ex:biv-chi}.}
    \label{fig:biv-chi-2}
  \end{figure}

  Let $\Theta = \pb{(\theta_1, \theta_2, \theta_3, \theta_4) \in \R^4_{+}: \theta_1 + \theta_2 + \theta_3 + \theta_4 = 4}$, and let $\fthetav$ be a probability density function on $\X = \R^2$, given by
  \eq{\label{eq:biv-chi}
  \ftheta(x_1, x_2) = \begin{cases}
    g(\phantom{+}{x}/{\sqrt\theta_1}, \  \phantom{+}{y}/{\sqrt\theta_1}) / 4, & \text{if } x \ge 0, y \ge 0\\
    g({-x}/{\sqrt\theta_2}, \ \phantom{+}{y}/{\sqrt\theta_2}) / 4, & \text{if } x < 0, y \ge 0\\
    g(\phantom{+}{x}/{\sqrt\theta_3}, \ {-y}/{\sqrt\theta_3}) / 4, & \text{if } x \ge 0, y < 0\\
    g({-x}/{\sqrt\theta_4}, \ {-y}/{\sqrt\theta_4}) / 4, & \text{if } x < 0, y < 0\\
  \end{cases},
  }
  
  where $g \sim \text{Biv-}\chi^2(10, 10, 0)$ is the probability density function of the bivariate $\chi^2$ distribution with parameters $n=p=10$ and $m=0$ \citep[Eq.~4.5]{gunst1973density}. In Example~\ref{ex:mass_transport_2}, we show that $\pb{\fthetav: \thetav \in \Theta}$ admits \Fequivalence{}, and, therefore, from Lemma~\ref{lem:inclusion} also admits \betaequivalence{}. Figures~\ref{fig:biv-chi-1-a}~and~\ref{fig:biv-chi-1-b} show the scatterplot of samples $\Xn$ obtained from two different distributions in this family. The $1^\textup{st}$ order (normalized) Betti curves $r \mapsto \beta_1(\K(\Xn, r)) / n$, in Figure~\ref{fig:biv-chi-1-c}, show that that their topological summaries are almost indistinguishable. 
  
  In Figure~\ref{fig:biv-chi-2-a}, for $k=20$ random parametrizations $\pb{\thetav_i: 1 \le i \le k}$ of $\fthetav$, we plot the Betti curves $\textsf{curve}_i: r \mapsto \beta_1\qty(\Xn(\theta_i), r)/n$ for $\Xn(\thetav_i)$ when $n \in \pb{1000, 5000, 10000}$. For fixed $n$, the (pointwise) mean Betti curve for the $k$ individual curves is highlighted in bold. The plot shows that as the sample size $n$ increases, the individual curves are harder to distinguish, and, shed light on the indistinguishability of the parameters in the asymptotic setting. In addition, the vertical dashed line in Figure~\ref{fig:biv-chi-2-a} highlights the region of maximum variability, which was empirically observed to correspond to $r_n = 20 \times n^{-1/2}$ in the thermodynamic regime. 
  
  To further investigate this region, we choose $k=3$ random parametrizations $f_{\thetav_i}$ for $i \in \qty{1, 2, 3}$. For $n$ ranging from $1,000$ to $20,000$, and for each combination of $n$ and $\thetav_i$ we generate $30$ realizations of the samples from $f_{\thetav_i}$, i.e., $\Xn^{(j)}(\thetav_i)$  for $j \in \qty{1, 2, \dots, 30}$, and compute their normalized Betti numbers in the thermodynamic regime, $\textsf{b}(n, i, j) := \beta_1(\K(\Xn^{(j)}(\thetav_i), r_n)) / n$, with $r_n = 20 \times n^{-1/2}$. Figure~\ref{fig:biv-chi-2-a} shows the plot of $n$ vs. $\textsf{b}(n, i, j)$ for each combination of $i$ and $j$. For each fixed $\thetav_i$, the mean curve is highlighted in a bold dotted line with the shaded region illustrating one standard deviation. The figure shows that the topological summaries, in this example, are insufficient to distinguish the three parameters in a formal statistical setting. The accompanying code for reproducing the experiments is available at \href{https://github.com/sidv23/invariance}{\textup{\texttt{https://github.com/sidv23/invariance}}}.
\end{example}

}
\section{\Fequivalence{} I: Algebraic Perspective}
\label{sec:invariance2}

Having introduced the notion of \Fequivalence{}, and discussed its implications in the preceding section, a natural question arises: \textit{when does a family of distributions admit \Fequivalence{}?} In this section, we impose an algebraic structure on the parametric model $\P(\Theta)$ in order to provide a general template for characterizing distributions that admit \Fequivalence{}. We begin by motivating the choice of imposing an algebraic structure by means of the following prototypical example. 

\begin{example}[Location and scale families]
  For $\X = \R^d$ and a fixed density function $f_{\zerov}$ on $\R^d$, consider the location family of distributions, $\P_{loc} = \qty{\fthetav(\xv) = f_{\zerov}(\xv - \thetav) : \thetav \in \R^d}$, if $\xtheta \sim \fthetav$, by a standard transformation of random variables it follows that ${(\xtheta - \theta) \distas \Xv_{\zerov} \sim f_{\zerov}}$. Therefore, $\fthetav(\xtheta) = f_{\zerov}(\xtheta - \thetav) \distas f_{\zerov}(\Xv_{\zerov})$, which does not depend on $\theta$. It follows that $\P_{loc}$ admits \Fequivalence{}.

  \smallskip

  However, the scale family of distributions, $\P_{scale} = \qty{g_\theta(\xv) = \theta\inv f_1(\xv / \theta) : \theta \in \R_+ }$ for a fixed density function $f_1$ does not admit \Fequivalence{}. If $\ytheta \sim \gtheta$, then $\ytheta/\theta \distas \Yv_1 \sim f_1$. But, the distribution of $\gtheta(\ytheta)$ is the same as the distribution of $\theta\inv f_1(\Yv_1)$, which clearly depends on $\theta \in \R_+$. Therefore, $\P_{scale}$ does not admit \Fequivalence{}.
\end{example}

It may be argued that it is natural to expect topological invariants to be insensitive to translations but sensitive to scaling and dilation. However, the next example illustrates a nontrivial family of distributions which admit \Fequivalence{}.

\begin{figure}
  \centering
  \includegraphics[width=0.9\linewidth]{\Root/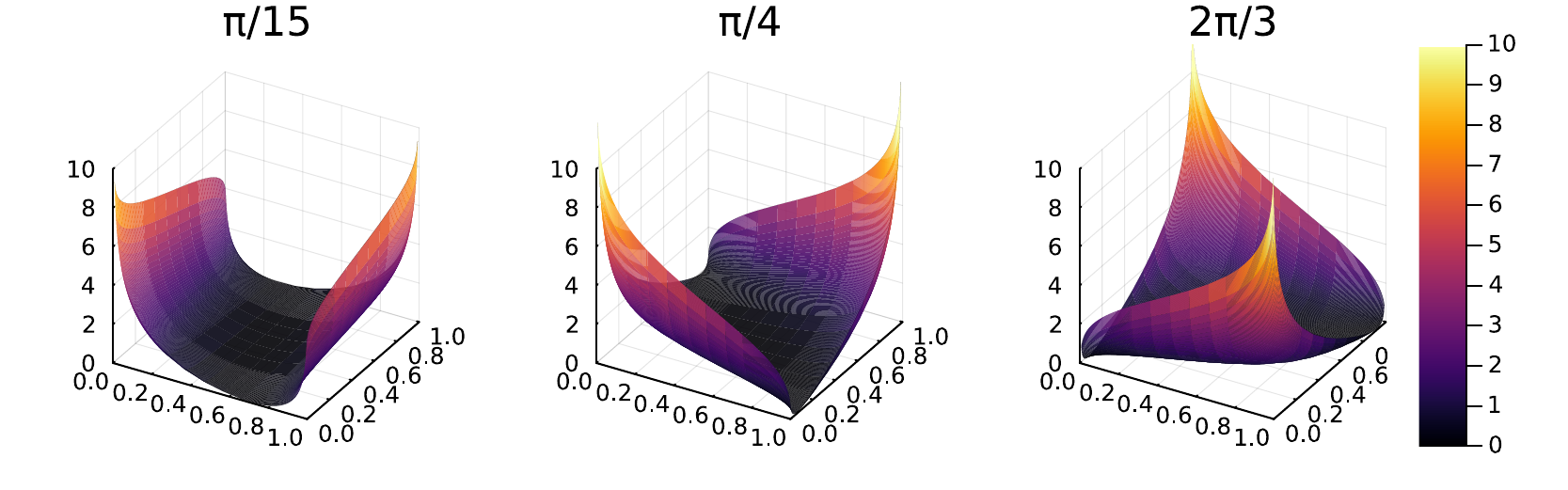}
  \caption{Illustration $\ftheta(x, y)$ from Example~\ref{ex:motivating} for $\theta \in \{\pi/15, \pi/4, 2\pi/3\}$}
  \label{fig:motivating}
\end{figure}

\begin{example}[Motivating example]\label{ex:motivating}
  Let $\X = [0,1]^2$ and $\Theta = [0, 2\pi]$, consider the family of distributions $\P(\Theta) = \pb{\ftheta: \theta \in \Theta}$ given by 
  $$
  \ftheta(x, y) = \qty(\cos\theta \cdot \Phi\inv(x) + \sin\theta \cdot \Phi\inv(y))^2 \mathbbm{1}((x, y) \in \X),\nn
  $$
  where $\Phi\inv(\cdot)$ is the inverse CDF of a standard Normal distribution $\mathcal{N}(0, 1)$. See Figure~\ref{fig:motivating} for an illustration. It is easy to verify that $\ftheta$ is a well-defined probability density function. Let $(X_\theta, Y_\theta) \sim \ftheta$ and consider the transformation $(x, y) \mapsto (\Phi\inv(x), \Phi\inv(y)) =: (u, v)$. A simple transformation shows that the density function for the transformed random vector $(U_\theta, V_\theta) = (\Phi\inv(X_\theta), \Phi\inv(Y_\theta))$ is given by
  \eq{
    f_\theta(u, v) = \qty( \cos\theta\cdot u + \sin\theta\cdot v )^2 (2\pi)\inv e^{-\half (u^2 + v^2)}.\nn
  }
  By further transformation $(u, v) \mapsto \qty\big(u\cos(\theta) + v\sin(\theta), -u\sin(\theta) + v\cos(\theta)) =: (r, s)$, the density function for the random vector $(R_\theta, S_\theta)$ becomes
  \eq{
    f_\theta(r,s) = (2\pi)\inv r^2 e^{-{r^2}/{2}} \cdot e^{-{s^2}/{2}},\nn
  }
  from which we can see that the distribution of $(R_\theta, S_\theta)$ is free of $\theta$ and $R_\theta \indep S_\theta$. Therefore, by marginalizing $(R_\theta, S_\theta)$ and then squaring, the desired distribution for $Z_\theta = \ftheta(X_\theta, Y_\theta) = R_\theta^2$ is 
  \eq{
    f_{Z_\theta}(z) = \qty({z/2\pi})^{-1/2} e^{-z/2},\nn
  }
  which shows that $Z_\theta \sim \Gamma(3/2, 1/2)$ for all $\theta \in \Theta$. Therefore, $\P(\Theta)$ admits \Fequivalence{}. 
\end{example}

A key step in establishing \Fequivalence{} in Example~\ref{ex:motivating} is showing that the distribution of $(R_\theta, S_\theta)$ is free of $\theta$. In particular, the map $(u, v)\!\mapsto\!(r,s)$ can be represented by the action of an element of the rotation group, i.e., $(r, s) = g_\theta(u, v)$, where $g_\theta \in \sor{2}$; and the $(u^2 + v^2)$ term in $f_\theta(u, v)$ ensures that the action of $g_\theta$ remains constant on orbits of $g_\theta$. This suggests that the algebraic structure underlying the family of distributions (if any) plays a key role in establishing their invariance. Lie groups provide a natural framework for studying such properties. We begin by reviewing some basic facts about group invariance in statistics. The presentation here closely follows \cite{wijsman1990invariant} and \cite{eaton1989group}.

Given a group $\G \defeq \pa{\G,*}$ acting on a space $\X$, its \emph{orbit} is given by $\G\xv = \pb{g\xv : g \in \G}$; the \emph{stabilizer} with respect to $\xv$ is $\G_{_\xv} = \pb{g\in \G : g\xv=\xv}$. When $\G$ acts bijectively on $\X$, a function $T : \X \rightarrow \Tau$ is $\G$-invariant if $T(\xv) = T(g\xv)$ for all $g\in \G$ {and a space $\Tau$}.

\begin{definition}[Maximal Invariant]
  {A function $T: \X \rightarrow \Tau$ is {$\G$-maximal invariant} if it is constant on orbits, i.e., $T(g\xv) = T(\xv)$ for each $g \in \G$; and, it takes different values on different orbits, i.e., for each $\xv,\yv \in \X$ such that $T(\xv) = T(\yv)$, we have that $\yv \in \G\xv$.\label{def:maximal_invariant}}
\end{definition}

It follows from this definition that if $J: \Tau \rightarrow \mathcal{J}$, {from the space $\Tau$ to the space $\mathcal{J}$}, is any injective map, then the composition $J \circ T$ will also be $\G$-maximal invariant. Thus, maximal invariants for a group $\G$ are unique up to injective transformations. The relationship between a $\G$-invariant function and $\G$-maximal invariance is described in the following result.

\begin{proposition}[\citealp{eaton1989group}]
  Suppose $T : \X \rightarrow \Tau$ is $\G$-maximal invariant. A function $\phi : \X \rightarrow \Y$ is $\G$-invariant if and only if there exists $\kappa: \Tau \rightarrow \Y$ such that $\phi = \kappa \circ T$.
  \label{lem:maximal_invariant}
\end{proposition}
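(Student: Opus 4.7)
The proof splits into the two implications, with the forward one being immediate and the reverse one being the place where maximal invariance does the real work.

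For the easy direction, suppose there exists $k : \Tau \rightarrow \Y$ with $\phi = k \circ T$. Since $T$ is $\G$-maximal invariant, in particular $T$ is $\G$-invariant, so $T(g\xv) = T(\xv)$ for every $g \in \G$ and $\xv \in \X$. Applying $k$ to both sides gives $\phi(g\xv) = \phi(\xv)$, so $\phi$ is $\G$-invariant.

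For the reverse direction, assume $\phi$ is $\G$-invariant. The plan is to construct $k$ by ``factoring through orbits'': for each $\tau$ in the image $T(\X) \subseteq \Tau$, choose any $\xv \in T^{-1}(\tau)$ and declare $k(\tau) \defeq \phi(\xv)$; for $\tau \notin T(\X)$, set $k(\tau)$ to be any fixed element of $\Y$ (the statement is vacuous if $\Y = \varnothing$). Once well-definedness is established, the relation $\phi = k \circ T$ holds by construction on all of $\X$.

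The one nontrivial step, and the entire content of the proposition, is that this prescription for $k$ on $T(\X)$ does not depend on the choice of representative $\xv$. Suppose $T(\xv) = T(\yv)$. By the second clause in Definition \ref{def:maximal_invariant}, $T$ takes distinct values on distinct orbits, hence $\yv \in \G\xv$ and there exists $g \in \G$ with $\yv = g\xv$. Then $\G$-invariance of $\phi$ gives $\phi(\yv) = \phi(g\xv) = \phi(\xv)$, so $k(\tau)$ is unambiguous. The only obstacle one might worry about is whether such a $k$ can be chosen measurably or continuously, but the proposition as stated makes no such regularity demand on $k$, so the set-theoretic construction above suffices.
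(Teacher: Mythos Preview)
Your proof is correct and is the standard argument. The paper itself does not supply a proof of this proposition; it simply attributes the result to \cite{eaton1989group} and uses it as a black box, so there is nothing to compare against beyond noting that your construction is exactly the one found in Eaton's text.
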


If $\Psi: \X \rightarrow \Y$ is a surjective function
taking elements from $\X$ to the space $\Y$,
then the action of $g\in \G$ on elements in $\X$ \textit{induces} an action on elements in $\Y$ via $\Psi$. The \textit{induced action} of $\G$ on $\Y$ is given by $g\yv \defeq \Psi(g\xv)$ for every $\xv\in \Psi\inv(\yv)$. This is well-defined whenever $\Psi(\xv_1) = \Psi(\xv_2)$ implies that $\Psi(g\xv_1) = \Psi(g\xv_2)$ for each $g\in \G$. We say that $\Psi: \X \rightarrow \Y$ is \emph{$\G$-compatible} when the induced action of $\G$ on $\Y$ is well-defined. 

With this background, let $\pr$ be a fixed distribution on $\X$ and $\Xv \sim \pr$. The action of an element $g \in \G$ on $\X$ induces a transformation on $\Xv$ to a new random variable $g{\Xv}$ taking values in $\pa{g{\X},\borel\pa{g{\X}}}$. With a slight abuse of notation, let $g_{\#}\pr$ denote the distribution of $g\Xv$. If the elements of $\G$ are indexed by a parameter $\theta \in \Theta$, i.e., $\G(\Theta) = \pb{\gtheta: \theta \in \Theta}$, then the action of $\G$ on $\Xv$ induces a family of distributions $\P(\Theta) = \pb{\pr_\theta = {\gtheta}_{\#}\pr : \theta \in \Theta}$. The following result establishes necessary and sufficient conditions for $\P(\Theta)$ to admit \Fequivalence{} for a wide class of distributions.

\begin{theorem}[Group Invariance]
  Suppose $\Psi: \X \rightarrow \Y$ is differentiable and bijective, $\mathcal{G}(\Theta)$ is a group of Borel-measurable isometries acting on $\Y$, and $T: \Y \rightarrow \Tau$ is $\mathcal{G}(\Theta)$-maximal invariant. Define the family of probability distributions $\P(\Theta) = \pb{\ftheta : \theta \in \Theta}$ on $\X$ by
  \eq{
  \ftheta(\xv) = \phi\pa{\gtheta \circ \Psi\pa{\xv}},\nonumber
  }
  where $g_\theta\in\mathcal{G}(\Theta)$ and $\phi: \X \rightarrow \R_{\geq 0}$ is some function which ensures that $\ftheta$ is a valid density. Then,

  {
  \begin{enumerate}[label=\textup{(\roman*)}]
      \item $\P(\Theta)$ admits \Fequivalence{} if and only if there exists $\zeta: \Tau \rightarrow \R$ such that
    $$
    {\det\pa{\mathbf{D}{\Psi\inv}\pa{\yv}} = \zeta\pa{T\pa{\yv}}}.
    $$

    \begin{minipage}{0.9\textwidth} 
      \item If $\G(\Theta) = \mathop{\times}_{i=1}^{m}{\G_i(\Theta_i)}$ where $\Theta = \Theta_1 \times \Theta_2 \times \dots \times \Theta_m$, then $\P(\Theta)$ admits \Fequivalence{} if and only if there exists a sequence of $\G_i(\Theta_i)$-compatible functions $T_i: \Y_{i-1} \rightarrow \Y_{i}$ with $\Y_0 = \Y$ and a function $\zeta: \Y_m \rightarrow \R$ such that each $T_i$ is $\G_i(\Theta_i)$-maximal invariant and
      \eq{
      {\det\pa{\mathbf{D}{\Psi\inv}\pa{\yv}} = \zeta\pa{T_m \circ T_{m-1} \circ \dots \circ T_1\pa{\yv}}}.\nn
      }
  \end{minipage}
  \end{enumerate}
  }
  \label{thm:group_invariance}
\end{theorem}
We defer the proof to Section~\ref{proof:thm:group_invariance} and illustrate some examples of Theorem \ref{thm:group_invariance} in the remainder of the section. We begin by illustrating a multivariate generalization of Example~\ref{ex:motivating} in the context of Theorem~\ref{thm:group_invariance}. 

\begin{example}
  Let $\xi \sim F$ be random variable on $\R$ with density $f(x) = F'(x)$ such that $\E(\xi)=0$ and $\E(\xi^{2})=1$. For $\X = \pc{0,1}^d$ and $\Theta \in \S^{d-1}$, let $\P(\Theta) = \pb{\fthetav: \theta \in \Theta}$ be given by
  \eq{
  \fthetav(\xv) = {\qty\big({\thetav^{\top}{\Fv}\inv(\xv)})^2} \mathbbm{1}(\xv \in \X),
  \label{eq:sod_density}
  }
  where, for brevity, ${\Fv}\inv(\xv) \defeq \qty\big({F\inv(x_1),F\inv(x_2),\dots,F\inv(x_d)})^\top \in \R^{d}$ is the ``vectorized'' inverse CDF of $F$. Then, the family of distributions $\P(\Theta)$ admits \Fequivalence{} if and only if $\xi \sim \mathcal{N}(0,1)$.

  First note that $\ftheta$ is a well-defined density. To see this, taking $\yv = \Fv\inv(\xv)$, we have
  {
  \eq{
    \int_{\X}\ftheta(\xv)d\xv 
    &= \int_{\R^d} (\theta\tr\yv)^2 \prod_{i=1}^d f(y_i) \  d\yv\nn\\
    &= \sum_{j=1}^d\sum_{k=1}^d \int_{\R^d} \theta_j\theta_k y_j y_k \prod_{i=1}^d f(y_i)  \ d\yv \nn\\
    &= \sum_{j=1}^d \int_{\R^d} \theta_j^2 y^2_j \prod_{i=1}^d f(y_i)  \ d\yv + \sum_{1 \le j < k \le d} \int_{\R^d} 2 \cdot \theta_j\theta_k y_j y_k \prod_{i=1}^d f(y_i)  \ d\yv \nn\\
    &\stackrel{\textup{(i)}}{=} \sum_{j=1}^d \theta_j^2 + 0 = 1,\nn
  }
  where (i) uses the fact that $\E(\xi) = 0$ and $\E(\xi^2) = 1$. 
  }

  Let $\G = \sor{d} = \qty{ \gtheta \in GL(\R, d) : \gtheta\inv = \gtheta\tr, \gtheta\tr e_1 = \theta \in \S^{d-1} }$ be the group of rotations on ${\Y=\R^d}$ where $e_1 = (1, 0, \dots, 0)^\top \in \R^d$. {The function $T(\yv) = \norm{\yv}^2$ is $\G$-maximal invariant since $T(\yv_1) = T(\yv_2)$ if and only if $\yv_2 = \gtheta \yv_1$ for some $\gtheta \in \G$}. As per Theorem~\ref{thm:group_invariance}, \eref{eq:sod_density} is equivalently written as $\ftheta(\xv) = \phi(\gtheta \circ \Psi(\xv))$ where 
  \eq{
    \Psi(\xv) =  \Fv\inv(\xv), \quad and \quad \phi(\yv) = (\yv\tr e_1)^2.\nn
  }
  The Jacobian for $\Psi\inv$ is given by $\norm{\mathbf{D}{\Psi\inv}(\yv)} = \prod_{i=1}^n f(y_i)$. From Theorem~\ref{thm:group_invariance} it follows that $\P(\Theta)$ admits \Fequivalence{} if and only if there exists some function $\zeta_d: \R_+ \rightarrow \R_+$, which may implicitly depend on $d$, such that $\prod_{i=1}^n f(y_i) = \zeta_d(\norm{\yv}^2)$. We show, using Lemma~\ref{lemma:normal_density} in Appendix~\ref{supplementary}, that this is satisfied only when $\xi \sim \mathcal{N}(0, 1)$. When $d=2$, this recovers the family of distributions illustrated in Example~\ref{ex:motivating}.
  \label{ex:orthogonal}
\end{example}

To illustrate a family of distributions generated by a subgroup of transformations, consider the following variant of Example~\ref{ex:orthogonal}.

\begin{example}
  For $p+q=d$, consider $\G=\sor{p} \times \sor{q}$ acting as a subgroup of $\sor{d}$ on $\Y=\R^d$, i.e., for every $\gtheta \in \G$ it follows that $\gtheta\tr e_1 = \theta_p \oplus \theta_q$ where $\theta_p \in \S^{p-1}$ and $\theta_q \in \S^{q-1}$. Let $\Phiv_p\inv: [0,1]^p \rightarrow \R^p$ and $\Phiv_q\inv: [0,1]^q \rightarrow \R^q$ be the ``vectorized'' inverse CDFs of $\N(0,\sigma^2_p)$ and $\N(0,\sigma^2_q)$. For $\X = [0,1]^d$, consider $\P(\Theta) = \pb{\ftheta: \theta \in \S^{p-1} \times \S^{q-1}}$ given by
  \eq{
    \ftheta(\xv) = \kappa(p, q)\qty(\theta_p\tr\Phiv_{p}\inv(\xv_p) + \theta_q\tr\Phiv_{q}\inv(\xv_q))^2 \ \mathbbm{1}(\xv_p \oplus \xv_q \in \X),
  }
  where $\xv = \xv_p \oplus \xv_q$ and $\kappa(p, q)$ is a fixed normalizing constant free of $\theta$. As before, $\ftheta(\xv) = \phi(\gtheta \circ \Psi(\xv))$ where
  \eq{
    \Psi(\xv) = \qty(\Phiv_{p}\inv(\xv_p), \Phiv_{q}\inv(\xv_q)), \quad and \quad \norm{\mathbf{D}{\Psiv\inv}(\yv)} = \f{\sigma_p^p \sigma_q^q}{(2\pi)^{d/2}} \exp\pa{-\f{\norm{\yv_p}^2}{2\sigma_p^2} -\f{\norm{\yv_q}^2}{2\sigma_q^2}}.\nn
  }
  In the context of Theorem~\ref{thm:group_invariance} let $\Y_1 = \R^{p+2}$ and $\Y_2=\R^2$, and consider $T_1: \Y \mapsto \Y_1$ and $T_2: \Y_1 \mapsto \Y_2$ given by
  \eq{
    T_1: \yv_p \oplus \yv_q \mapsto \yv_p \oplus \norm{\yv_q}^2, \quad and \quad T_2: \yv_p \oplus z \mapsto \norm{\yv_p}^2 \oplus z.\nn
  }
  Clearly, $T_1$ is $\sor{q}$-maximal invariant and $T_2$ is a maximal invariant w.r.t. the induced action of $\sor{p}$ on $\Y_1$. Therefore, for $\zeta: \R^2$ given by
  \eq{
    \zeta(x,y) = \f{\sigma_p^p \sigma_q^q}{(2\pi)^{d/2}} \exp\pa{-\f{x}{2\sigma_p^2} -\f{y}{2\sigma_q^2}},\nonumber
  }
  it follows that $\norm{\mathbf{D}{\Psiv\inv}(\yv)} = \zeta(T_2 \circ T_1 (\yv))$, implying that $\P(\Theta)$ admits \Fequivalence{}.
\end{example}

The preceding two examples provide an alternate characterization of the Normal distribution through the lens of \Fequivalence{} w.r.t.~rotational transformations. The next example illustrates \Fequivalence{} for a family of distributions generated by an unconventional group of transformations. 

\begin{example}
  For a fixed shape parameter $\kappa = 0.5$, consider the family of bivariate Weibull distributions on $\R^2_+$ given by
  \eq{
  \ftheta(x,y) = \f{1}{4\sqrt{xy}} \exp\pa{-\theta \sqrt{x} - \f{\sqrt{y}}{\theta}}.\nonumber
  }
  For $\Theta=\R_+$, it follows that $\P(\Theta) = \pb{\ftheta(x,y) : \theta \in \Theta}$ admits \Fequivalence{}. We verify this as per Theorem \ref{thm:group_invariance}. The functions $\Psi$ and $\phi$ are $\Psi(x,y) = (\sqrt{x},\sqrt{y})$, and  $\phi(x,y) = \exp\pa{-(x+y)}/4xy$. The group action may be identified as follows: {let $\G$ be a subgroup} of $GL(\R, 2)$ consisting of elements
  \eq{
  \gtheta = \begin{pmatrix}\setlength{\arraycolsep}{10pt}
  \theta & 0\\
  0 & {1}/{\theta}
  \end{pmatrix},\nn
  }
  for $\theta \in \R_+$. It is easy to verify that $T(x,y) = xy$ is $\G$-maximal invariant. The density can now be expressed in the form $\ftheta(x,y) = \phi\pa{\gtheta \circ \Psi(x,y)}.\nn$
  It follows that $\Psi\inv(x,y) = ({x^2,y^2})$ and 
  $$
  \det\pa{\mathbf{D}{\Psi\inv}(x,y)} = 4xy = 4T(x,y).
  $$
  Hence, by Theorem \ref{thm:group_invariance}, $\P(\Theta)$ admits \Fequivalence{}.
\end{example}
\section{\Fequivalence{} II: General Cases}
\label{sec:invariance3}

While the results in Section \ref{sec:invariance2} examined necessary and sufficient conditions for \Fequivalence{} by enforcing some algebraic structure on the family of distributions, the objective of this section is to relax these requirements and, instead, exploit the structure underlying the support of the distributions, $\X$. Before we present the main results, we introduce the main tools we employ:
\begin{enumerate}[itemsep=-2pt,label=\textup{(\roman*)}]
  \item The {\textit{modular character of a measure}} $\mu$, denoted by $\Psi_\mu$, and
  \item A fiber bundle representation $\e = (\X, \z, \Y, \pi, G)$ of the underlying space $\X$.
\end{enumerate}

\smallskip

\noindent {\bfseries Modular Character. \hquad} Consider the set of diffeomorphisms $\Delta(\X)$, given by
$$
  \Delta(\X) \defeq \pb{\phi \in \textup{Diff}\pa{\X} : \absdetj{\phi(\xv)} = \absdetj{\phi(\xv')}, \ \forall \xv,\xv' \in \X}.
$$
In other words, $\Delta(\X)$ comprises of smooth maps from $\X$ to itself such that the Jacobian of the map does not depend on the specific location where the transformation is made, e.g., when $\X=\R^d$ the set $\Delta(\X)$ {is the group of rigid transformations on $\R^d$, $E(d)$}. The elements of $\Delta(\X)$ form a subgroup of transformations with respect to $\textup{Diff}\pa{\X}$. The change in measure induced by diffeomorphic transformations of a space $\X$ w.r.t.~the measure $\mu$ is given by its modular character.

\begin{definition}[Modular Character]
  Given a measure $\mu$ on the space $\pa{\X,\borel(\X)}$, a function $\Psi$ is defined to be the modular character of $\mu$ if for each  $\phi \in \Delta(\X)$ with $\yv = \phi(\xv)$ we have that
  \eq{
    \mu(d\yv) = \Psi\pa{\absdetj{\phi}} \mu(d\xv).\nonumber
  }
\end{definition}

For example, when $\X = \R^d$ and $\mu = \lambda_d$, the $d$-dimensional Lebesgue measure, the modular character of $\mu$ is given by $\Psi(x) = x$. Observe that for any full-rank linear map $\phi \in GL(\R, d)$ such that $\yv = \phi(\xv)$, we have $\mu(d\yv) = d\yv = \absdetj{\phi} d\xv = \absdetj{\phi} \mu(d\xv)$. A nontrivial example of the modular character is illustrated in Example~\ref{ex:spherical}.

\begin{remark}
  The modular character is closely related to the notion of tensor-density (see, for instance, \citealt{schouten1954ricci}). When $G \le \Delta(\X)$ is a locally compact group continuously acting on $\X$ from the left, i.e., $(g, \xv) \mapsto g \cdot x$ for every $g \in G$, then $\Psi_\mu$ is also called the relatively invariant multiplier \citep{eaton1989group}.
\end{remark}

\smallskip

{
  \noindent{\bfseries Fiber Bundle Representation. \hquad} Following the convention in \cite{steenrod1999topology}, when $\X$ is a compact $d$-dimensional $\mathcal{C}^1$-manifold, suppose $\X$ admits a fiber bundle representation ${\e \defeq \pa{\X, \z, \Y, \pi, G}}$, i.e., $\X$ is the total space, $\z$ is the base space with the bundle projection $\pi$ defined by the continuous surjective map $\pi:\X \rightarrow \z$, and $G$ is a topological group\footnote{We use $G$ to distinguish the structure group, in this section, from the groups, $\G$, in Section~\ref{sec:invariance2}.} which acts on the canonical fiber space $\Y$. A collection $\pb{(V_j, \psi_j) : j \in J}$ is called a \textit{local trivialization} of $\e$ and serves as the coordinate charts for the fiber bundle, i.e., $\pb{V_j : j \in J}$ is an open cover of $\z$ and for each $j \in J$ the map $\psi_j : V_j \times \Y \rightarrow \pi\inv(V_j)$ is a diffeomorphism which guarantees that locally, in the neighborhood $V_j \subset \z$, the fiber $\pi\inv(V_j)$ looks like the product $V_j \times \Y$. In particular, the map $\psi_{j, \zv} = \psi_j(\zv, \cdot): \Y \rightarrow \Y_{\zv} := \pi\inv(\zv) $ is a diffeomorphism for each $\zv \in V_j \subseteq \z$. 
  
  Furthermore, for every $i, j \in J$ and all $\zv \in V_i \cap V_j$, the map $\psi\inv_{j, \zv} \circ \psi_{i, \zv}: \Y \rightarrow \Y$ should coincide with an element $g_{ji,\zv}$ of the structure group $G$, and $\e$ is represented by the following commutative diagram:
}

\begin{center}
  \begin{tikzcd}
    \pi^{-1}(V_j) \arrow[r, "\psi_j\inv"] \arrow[d, "\pi"'] & V_j \times \mathcal Y \arrow["g \in G"', loop, distance=2em, in=35, out=325] \arrow[ld, "{(\boldsymbol z, \boldsymbol y) \mapsto \boldsymbol z}"] \\
    V_j                                                 &
  \end{tikzcd}
\end{center}

{
From \citet[Section~2.4]{steenrod1999topology}, if $\pb{(U_i, \eta_i) : i \in I}$ is another local trivialization for $\e$, then the two local trivializations are \textit{equivalent} in the sense that for every $\zv \in U_i \cap V_j$ there exists $g\ijz \in G$ such that
\eq{\label{eq:charts}
  g\ijz = \psi\inv\jz \circ \eta\iz.
}
In other words, the structure group $G$ determines the change of coordinates from $\pb{(V_j, \psi_j) : j \in J}$ to $\pb{(U_i, \eta_i) : i \in I}$.




If the base space $\z$ and the fiber $\Y$ are endowed with measures $\mu$ and $\nu$ respectively, they induce a \textit{local product measure} $\lambda = \nu \otimes_{loc} \mu$ on the space $\X$ \citep{goetz1959measures}. Specifically, for a measurable set $A \subset \X$ such that $\pi(A) \subset V_j \subseteq \z$, the induced measure $\lambda$ is given by
\eq{
  \lambda(A) = \pa{\nu \otimes\loc \mu }(\psi_j\inv(A)) = \int_{\pi(A)} \nu_{\zv}(\pi\inv(\zv) \cap A) \mu(d\zv),
  \label{eq:product_measure}
}
where for $\xv \in \X$ and $\zv = \pi(\xv)\in V_j \subseteq \z$, the measure $\nu_{\zv} \defeq \pa{\psi\jz}_{\#}\nu$ is the pushforward of $\nu$ on the space $\pi\inv(\zv)$ via the diffeomorphism $\psi\jz$, i.e., $\nu_\zv(B) = \nu(\psi\jz\inv(B))$ for all measurable $B \subseteq \pi\inv(\zv)$. Furthermore, for $f\!\in\!L^1(\X,\lambda)$, \cite{goetz1959measures} characterizes the following version of \mbox{Fubini's theorem:}
\eq{
  \int_{\X}{f(\xv)\lambda(d\xv)} = \int_{\z} \int_{\pi\inv(\zv)}{f(\wv) \ \nu_{\zv}(d\wv) \mu(d\zv)}.
  \label{eq:fubini}
}

From \citet[Theorem~1]{goetz1959measures}, the measure $\lambda$ in \eref{eq:product_measure} exists only when $\nu$ is $G$-invariant. To see this, consider two local trivializations $\pb{(V_j, \psi_j): j \in J}$ and $\pb{(U_i, \eta_i): i \in I}$; for $\zv \in U_i \cap V_j$ and a measurable set $B \subseteq \pi\inv(\zv)$ from the fiber over $\zv$, the induced measure $\nu_\zv(B)$ should be invariant to the specific choice of the local trivialization, i.e., $\nu\qty\big(\eta\iz\inv(B)) = \nu\qty\big(\psi\jz\inv(B))$. From \eref{eq:charts}, this implies that
\eq{
  \nu\qty\big(\eta\iz\inv(B)) = \nu\qty\big(g\ijz \circ \psi\jz\inv(B)) = \nu\qty\big(\psi\jz\inv(B)),\nn
}
from which it follows that $\nu$ is $G$-invariant.
It is worth pointing out that when $\e$ is a flat bundle, i.e., $\X = \z \times \Y$, then $G = \pb{\textup{id}_{\Y}}$ and $\nu$ is always well-defined.

\textbf{Excess mass function.} \quad The final ingredient we require is an alternate characterization of \Fequivalence{} using \textit{excess mass functions}. They are defined as follows: For a probability distribution $\pr$ with density $f$ and $\Xv \sim \pr$, the \textit{excess mass function}, $\hat{f}$, is given by
\eq{
\hat{f}(t) = \pr\qty\big(f(\Xv) \ge t) = \int_\X \mathbb{1}(f(\xv) \ge t) f(\xv) d\xv, \quad \text{ for all } t \ge 0.\nn
}

Excess mass functions have been employed by, e.g., \cite{muller1991excess} \& \cite{polonik1995measuring}, for geometric inference in the framework of nonparametric statistics, and by \cite{bubenik2007statistical} to characterize the Betti-$0$ function.

The next lemma supplements Lemma~\ref{lem:inclusion} using excess mass functions.

\begin{lemma}
  For two probability density functions $f$ and $g$, $f \Feq g$ if and only if $\hat{f} = \hat{g}$. Moreover, if $\P$ is a family of distributions such that $\hat{f} = \hat{g}$ for all $f, g \in \P$, then $\P$ admits \betaequivalence{}.
  \label{lem:excess_mass}
\end{lemma}

In other words, for $\Xv \sim f$ and $\Yv \sim g$ we may replace the condition $f(\Xv) \distas g(\Yv)$ in Definition~\ref{def:Fequivalence} with the condition $\hat f = \hat g$. Lemma~\ref{lem:inclusion} formalizes the intuition that the topological summaries in the thermodynamic limit are capturing, at most, local information encoded in the superlevel sets of the probability density function.

With this background, the following result provides a sufficient characterization for distributions to admit \Fequivalence{} and is generated using structure underlying~$\X$.

\begin{theorem}
  Let $\e = \pa{\X,\z,\Y,\pi,G}$ be a fiber bundle representation of $\X$ with a local trivialization $\pb{(V_j, \psi_j) : j \in J}$ and compact $\Y$. For each $\xv  \in \X$ with $\zv = \pi(\xv) \in V_j$, let $f_\phi$ be given by
  \eq{
    f_\phi(\xv) = C \cdot g\qty\Big( \phi\qty\big( \psi\jz\inv(\xv), \zv ) ),
    \label{eq:fiber}
  }
  where
  \begin{enumerate}[topsep=0pt,itemsep=5pt,partopsep=1ex,parsep=1ex, leftmargin=2em,label=\textup{(\roman*)}]
    \item $C = 1 / \nu(\Y)$ is the density of the uniform distribution w.r.t. a measure $\nu$ on $\Y$,
    \item $g$ is the density of a probability distribution on $\z$ w.r.t. a base measure $\mu$ with modular character $\Psi_\mu$,
    \item $\phi : \Y \times \z \rightarrow \z$ is such that $\phi_\yv \defeq \phi(\yv,\cdot) \in \Delta(\z)$ for each $\yv \in \Y$.
  \end{enumerate}
  Then $\pb{f_\phi : \phi \in \Phi}$ admits \Fequivalence{} when
  \eq{
    \label{eq:jac_constraint}
    \Phi \subseteq \qty{ \phi: \int_{\Y} C \cdot \Psi_\mu\qty\big(\absdetj{\phi\inv(\yv, \cdot) }) \ d\nu(\yv) = 1 }.
  }\label{thm:nonlinear_invariance}
\end{theorem}
The proof is deferred to Section~\ref{proof:thm:nonlinear_invariance}, and we provide the intuition here. The density $g$ on the base space $\z$ together with the uniform distribution on the fiber $\Y$, induce a probability density $C \cdot g(\zv)$ locally in $\X$. The map $\phi(\yv, \cdot)$ is free to move mass on the base space $\z$, which, in turn, induces a different mass in the fiber over $\zv$. By Lemma~\ref{lem:excess_mass}, \Fequivalence{} holds when the excess mass function is preserved; this corresponds to the Jacobian constraint in \eref{eq:jac_constraint}. Theorem~\ref{thm:nonlinear_invariance} demonstrates that, in the thermodynamic limit, the topological summaries are effectively capturing, at most, only local information underlying the probability distribution.


While the density function $f_\phi$ in Theorem~\ref{thm:nonlinear_invariance} seems to depend on the choice of the local trivialization $\pb{(V_j, \psi_j) : j \in J}$, the following result shows that for a fixed $\phi \in \Phi$, the density function $f_\phi$ doesn't depend on the choice of the local trivialization used.

\newpage

\begin{proposition}
  Under the conditions of Theorem~\ref{thm:nonlinear_invariance}, let $\pb{(V_j,\psi_j) : j \in J}$ and $\pb{(U_i,\eta_i) : i \in I}$ be two local trivializations of $\e = (\X, \z, \Y, \pi, G)$. For a fixed $\phi$ and for $\xv \in \X$ with $\zv = \pi(\xv) \in U_i \cap V_j$, let $f_\phi(\xv)$ be the density given by \eref{eq:fiber}, and let $\ft_\phi(\xv)$ be given by
  \eq{\label{eq:fiber-alt}
    \ft_\phi(\xv) = C \cdot g\qty\Big( \phi\qty\big( \eta\iz\inv(\xv), \zv ) ).
  }
  Then, for every measurable $A \subset \X$,
  \eq{
    \int_{A}f_\phi d\lambda = \int_{A}\ft_\phi d\lambda.\nn
  } \label{prop:coordinate-equivalence}
\end{proposition}
In other words, Proposition~\ref{prop:coordinate-equivalence} ensures that two equivalent coordinate representations of the $\e$ induce the same probability density function $f_\phi$ on $\X$ for a fixed $\phi$, and the collection $\Phi$ in Theorem~\ref{thm:nonlinear_invariance} cannot be simply obtained by a coordinate transformation of $\e$.
\begin{remark}
  \begin{enumerate}[label=\textup{(\roman*)}]
    We highlight some salient observations regarding Theorem~\ref{thm:nonlinear_invariance} below.

    \item When $\Y = \G$ is a Lie group, $\mathscr{X}$ simplifies to become a principal $\G$-bundle and the bundle~projection $\pi : \X \rightarrow \X/\G$ projects each element in $\X$ to its orbit. Principal $\G$-bundles admit local cross-sections. Furthermore, when it admits a global-cross section, $\X$ admits the factorization ${\X = \X/\G \times \G}$, and the factorization of measure on $\X$ simplifies to the product of an invariant measure $\mu$ and an equivariant measure $\nu$ (see, for example, \citealt{kamiya2008star}). Lastly, when $\X = \Y \times \z$ is globally trivial, then the bundle charts simply become $\psi_\alpha = \textup{id}_{X}$, such that the induced-measure $\nu_{\zv}=\nu$ for each $\zv \in \z$, and \eref{eq:fubini} reduces to the familiar setting of Fubini's theorem.
          \item\label{remark:measure-zero} Suppose $\pr$ is a distribution on $\X$ with density $f$ w.r.t. a dominating measure $\lambda$, and $A \subset \X$ is a set of $\lambda-$measure zero. Let $\pr\big|_{\X \setminus A}$ be the restriction of $\pr$ on $\X \setminus A$ and let $f_{\X\setminus A}$ be its resulting density. Then, for all $t \ge 0$,
          $$
            \hat{f}(t) = \int_{\X} \mathbbm{1}\qty(f(\xv) \ge t) f(\xv) d\lambda(\xv) \stackrel{(i)}{=} \int_{\X\setminus A} \mathbbm{1}\qty(f(\xv) \ge t) f(\xv) d\lambda(\xv) = \hat{f}_{\X\setminus A}(t),
          $$
          where (i) follows by noting that $\lambda(A) = 0$. Therefore, $\hat{f} = \hat{f}_{\X\setminus A}$, indicating that \Fequivalence{} for families of distributions can be analyzed by excluding a set of measure zero from the space $\X$. In particular, if $\e = \qty(\X, \z, \Y, \pi, G)$ is a fiber bundle, then we may omit sets of measure zero from both $\z$ and $\Y$ since the dominating measure $\lambda$ on $\X$ is admits the disintegration $\lambda = \nu \otimes_{\textup{loc}} \mu$, for dominating measures $\nu$ and  $\mu$ on $\Y$ and $\z$, respectively.
  \end{enumerate}
  \label{remark:nonlinear}
\end{remark}

It follows from Theorem \ref{thm:nonlinear_invariance} that a fairly large family of distributions admit \Fequivalence{}. However, the exact representation of these families using the index $\phi$ is not entirely obvious. Nevertheless, the elements in $\Phi$ may be indexed by a more well-behaved set $\Theta$ such that the family $\pb{\ftheta : \theta \in \Theta}$ admits \Fequivalence{}. This is made precise in the following examples. The following example illustrates \Fequivalence{} when $\nu$ has a non-trivial modular character but $\X$ admits global cross sections.



\begin{example}
  Consider the spherical decomposition $\xv = (r, \thetav)$ of $\R^d \setminus \pb{\zerov}$ where $r \in \R_+$ and $\thetav \in \S^{d-1}$. Let $\mu$ be a measure on $\z = \R_+$ with density $g$ w.r.t. $\eta$ such that
  \eq{
    \eta(dr) = d(r^d) = r^{d-1}dr.
    \label{eq:nontrivial_modular_character}
  }
  For a nonnegative valued function $\xi: \mathbb{S}^{d-1} \rightarrow \R_+$, and for $\xv \in \X$ with spherical coordinates $(r, \thetav)$, let $f_\xi$ given by
  \eq{
  f_{\xi}(\xv) = {C_d} \cdot g\qty\big(r \cdot \xi(\thetav)),\label{eq:spherical}
  }
  where $1/C_d = 2\pi^{d/2} / \Gamma(d/2)$ is the surface area of $\S^{d-1}$. In the context of Theorem~\ref{thm:nonlinear_invariance}, consider the fiber bundle representation of $\X = \R^d \setminus \pb{\zerov}$ given by $\e = (\X, \Y, \z, \pi, G)$ where $\Y = \S^{d-1}$ denotes the typical fiber, the base space $\z = \R_+$, and the projection map  $\pi(\xv) = \norm{\xv}$. Taking $V=\z=\R_+$, and $\psi\inv: \X \rightarrow V \times \Y$ to be the map given by $$
    \psi\inv(\xv) = \qty(\norm{\xv}, \thetav) \qq{where} \theta_i = \arctan\qty\Big(\sqrt{x_{i+1}^2 + \dots + x_n^2} \big/  x_i), \qq{for} 1 \le i \le n-1,
  $$
  provides a local trivialization $\pb{(V, \psi)}$ for $\X$. Therefore, the density in \eref{eq:spherical} is equivalently given by
  \eq{
    f_\xi(\xv) = C_d \cdot g\qty( \phi( \psi\inv_r(\xv), r )),\nn
  }
  where $r = \pi(\xv)$ and $\phi(\thetav, r) = r \cdot \xi(\thetav)$. It is easy to verify that the map $\phi(\thetav, \cdot) \in \Delta(\z)$ for all $\thetav \in \S^{d-1}$, and $\phi\inv(\thetav, r) = r / \xi(\thetav)$. Importantly, from \eref{eq:nontrivial_modular_character}, the modular character of $\eta$ is $\Psi(t) = t^{d}$. If we consider the set $\Xi$, satisfying the constraint in \eref{eq:jac_constraint},
  \eq{
    \Xi = \qty{ \xi : \int_{\S^{d-1}} C_d \cdot \xi(\thetav)^{-d} \nu(d\thetav) = 1  },\nn
  }
  then from Theorem~\ref{thm:nonlinear_invariance} it follows that $\P(\Xi) = \pb{f_\xi: \xi \in \Xi}$ admits \Fequivalence{}. Moreover, from Remark~\ref{remark:nonlinear}~\ref{remark:measure-zero}, $f_\xi(\xv)$ also satisfies \Fequivalence{} on $\R^d$.
  \label{ex:spherical}
\end{example}

\begin{figure}
  \centering
  \includegraphics[width=0.9\linewidth]{\Root/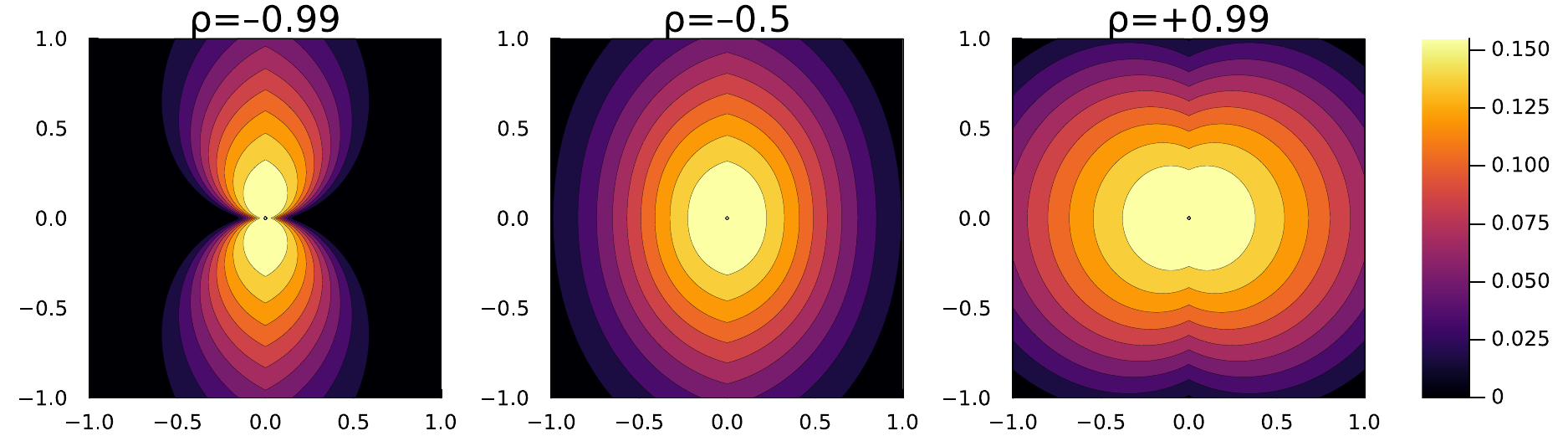}
  \caption{The sets $\qty\big{\xv \in \R^2 : f_\rho(\xv) \ge t}$ for $\rho \in \{-0.99, -0.5, 0.99\}$ respectively. For a fixed level $t$, all three of them have the same mass. In general, $\pr_{\rho}\qty\big({\{\xv \in \R^2 : f_\rho(\xv) \ge t}\})$ is the same for each $\abs{\rho}<1$.}

  \label{fig:invariance_radial}
\end{figure}

\begin{remark}

While seemingly contrived, the preceding example arises naturally in several situations. Note that the two base measures $\mu$ and $\nu$ on $\z = \R_+$ and $\Y = \S^{d-1}$, respectively, induce the standard Lebesgue measure $\lambda_d$ locally in $\R^d$, $\lambda_d = \eta \otimes_{\textup{loc}} \nu$, i.e., for $\xv = (r, \thetav)$ in local coordinates
$\lambda_d(d\xv) = \eta(dr) \cdot \nu(d\zv)$. In other words the non-trivial modular character in $\eta$ arises naturally when the standard Lebesgue measure on $\R^d$ is decomposed in spherical coordinates.

For a more concrete version of Example~\ref{ex:spherical}, let $d=2$ such that $\z = S^{1} \simeq [0, 2\pi]/\!\sim$ where $\qty{0} \sim \pb{2\pi}$, and for $\xv = (r\cost, r\sint)$ we have $\lambda_2(d\xv) = r \ dr \ d\theta$. Consider
$$
f(\xv) = \frac{g\qty\big(r \cdot \xi_\rho(\theta))}{2\pi},
$$
where $g(r)$ and $\xi_{\rho}(\theta)$ from Example~\ref{ex:spherical} are given by
\eq{
  g(r) = \pi\inv \cdot \exp({-{r^2}\big/{4\pi}}), 
  \quad \text{and }\quad 
  \xi_{\rho}(\theta) = 
  \pa{
    {1+\rho \cos\theta}
  }^{-1/2}.
  \label{eq:spherical}
  }
  Note that $\xi_\rho(\theta)$ is well-defined whenever $\abs{\rho}<1$, and
  $$
  \int_{0}^{2\pi} {}{\xi^{-2}_\rho(\theta)} \frac{d\theta}{2\pi} = 1.
  $$
  Therefore $\pb{f_\rho(\xv) : \abs{\rho} <1}$ admits \Fequivalence{}. The superlevel sets, $\pb{\xv \in\!\R^2\!: f_\rho(\xv)\!\ge t}$, of $f_\rho$ are shown in Figure~\ref{fig:invariance_radial}.
  
\end{remark}


The next example illustrates \Fequivalence{} for a family of distributions supported on the surface of a Möbius band with a full twist.

\begin{example}\label{ex:mobius}
  Let $\X$ be the surface of a Möbius band with a full twist given by
  \eq{
  \X = \qty\Big{ \xv(\theta, t) \in \R^3: \theta \in [0, 2\pi), \ t \in [-0.5, 0.5]},\nn
  }
  where
  \eq{
    \xv(\theta, t) = \qty\Big(\cos\theta(1 + t\cos\theta), \  \sin\theta(1 + t \cos\theta), \ t\sin\theta),\nn
  }
  as illustrated in Figure~\ref{fig:mobius-appendix-a}. For $\alpha \in \R$, let $\fa$ be a  density function on $\X$ given by
  \eq{
    \fa(\xv) = \fa\qty( \xv(\theta, t) ) = \frac{\exp\qty(\kappa \cdot \cos\qty(\alpha t + \theta))}{2\pi I_0(\kappa)},
    \label{eq:fa-mobius}
  }
  where $\kappa \in \R_+$ is a fixed parameter and $I_0(\kappa)$ is the modified Bessel function of the first kind. Then $\pb{\fa: \alpha \in \R}$ admits \Fequivalence{}. This follows by noting that $\X$ can be written as a fiber bundle with base space $\z = \S^1$ and typical fiber $\Y = [-0.5, 0.5]$. By identifying $\z \simeq [0, 2\pi]/\!\sim$ where $\qty{0} \sim \qty{2\pi}$, the projection map is given by $\pi(\xv) = \arctan({x_2/x_1})$. The collection $\pb{(V_j, \psi_j): j = 1, 2}$ provide the local trivialization for $\X$, where
  $$
    V_1 = (0, 2\pi) \setminus \pb{\pi}, \qq{and} V_2 = [0, \epsilon) \cup (\pi - \epsilon, \pi + \epsilon)
  $$
  for sufficiently small $0 < \epsilon < \pi/2$, and for $z = \pi(\xv) \in [0, 2\pi)$
  \eq{
    \psi\inv_{1, z}(\xv) = \frac{x_3}{\sin z}, \qq{and} \psi\inv_{2, z}(\xv) = \frac{1}{\cos z}\qty(\frac{x_1}{\cos z} - 1).\nn
  }
  A more detailed description is provided in Appendix~\ref{ex:mobius-appendix}. The density in \eref{eq:fa-mobius} may be rewritten as
  $$
    \fa(\xv) = g\qty( \phi_\alpha(\psi\inv_{j, \theta}(\xv), \theta) ),
  $$
  where $g(\theta) = \exp({\kappa \cos\theta}) / 2\pi I_0(\kappa)$ is the density of the von Mises distribution on $[0, 2\pi]/\!\sim$ with parameter $\kappa$, and modular character $\Psi(u) = u$. The map $\phi_{\alpha}(\theta, t) = \alpha t + \theta \mod 2\pi$ is such that $\phi_\alpha(\cdot, t) \in \Delta(\z)$ for all $t \in [-0.5, 0.5]$. The Jacobian constraint in \eref{eq:jac_constraint},
  \eq{
    \int_{-0.5}^{0.5} \absdetj{\phi\inv_\alpha(\cdot, t)} d\nu(t) = \int_{-0.5}^{0.5} \abs{\frac{d}{d\theta} (\theta - \alpha t \mod 2\pi)} d\nu(t) = \int_{-0.5}^{0.5} 1 \cdot dt = 1,\nn
  }
  is satisfied for all $\alpha \in \R$. Therefore, by Theorem~\ref{thm:nonlinear_invariance}, $\pb{\fa: \alpha \in \R}$ admits \Fequivalence{}. Figure~\ref{fig:mobius} shows the superlevel sets $\pb{\xv \in \X : \fa(\xv) \ge t}$ when $\alpha \in \pb{0, -5, 8}$.
\end{example}

\begin{figure}
  \centering
  \begin{subfigure}[t]{0.97\linewidth}
    \includegraphics[width=0.32\linewidth]{\Root/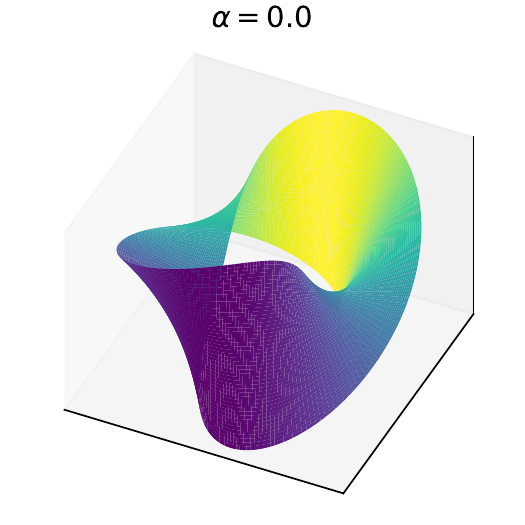}
    \includegraphics[width=0.32\linewidth]{\Root/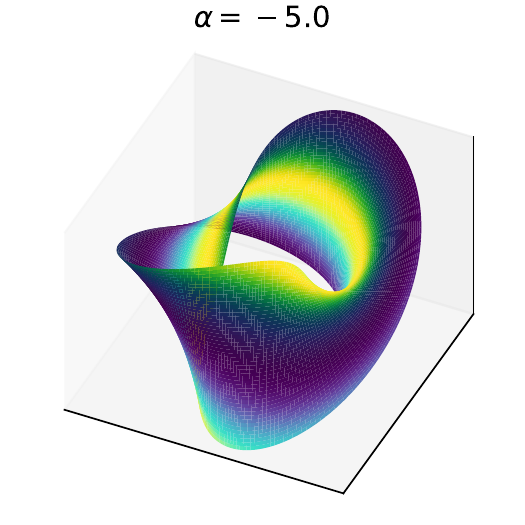}
    \includegraphics[width=0.32\linewidth]{\Root/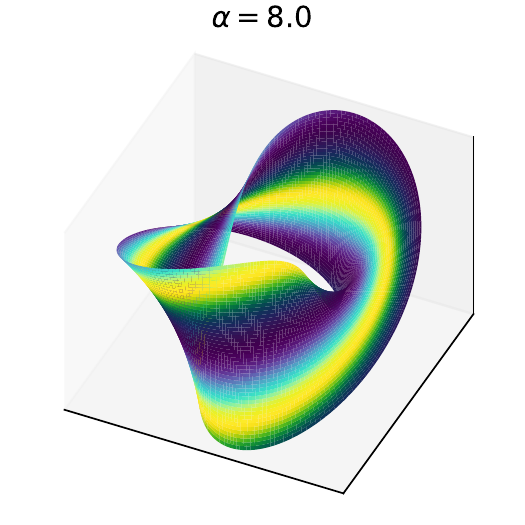}
  \end{subfigure}
  \hspace*{-1em}
  \begin{subfigure}[t]{0.01\linewidth}
    \includegraphics[width=4\linewidth]{\Root/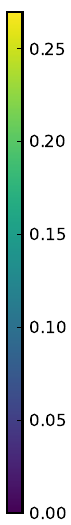}
  \end{subfigure}
  \caption{Superlevel sets of the probability density function $\fa$ on $\X$ from Example~\ref{ex:mobius} when $\alpha \in \pb{0, -5, 8}$. For a fixed level $t \ge 0$, the mass of the superlevel set $\pb{\xv \in \X : \fa(\xv) \ge t}$ is the same for each $\alpha$.}
  \label{fig:mobius}
\end{figure}

For the fiber bundle representation $\e = \pa{\X, \Y, \z, \pi, G}$, when the typical fiber space $\Y$ is discrete, Theorem~\ref{thm:nonlinear_invariance} simplifies to the following result.

\begin{corollary}\label{cor:discrete}
  Let $\X \subset \R^d$, $\mu$ be a measure on $\X$, and $\pi: \X \rightarrow A$ be a given continuous surjection from $\X$ to $A \subset \X$. Suppose $\X$ admits the representation $\X = \qty(\sqcup_{i=1}^N A_i) \bigsqcup \qty(\cup_{j=1}^m B_i)$ where each $A_i$ is diffeomorphic to $A$ and $\mu(B_j) = 0$ for each $j$. For a collection of maps $\phiv \defeq \pb{\phi_1, \phi_2, \dots, \phi_N}$ such that each $\phi_i \in \Delta(A)$, let $\fphiv$ be a probability density function on $\X$ given by
  \eq{\label{eq:discrete-density}
    \fphiv(\xv) = \frac{1}{N} \sum_{i=1}^N g\qty\big(\phi_i(\pi(\xv))) \cdot \mathbbm{1}(\xv \in A_i),
  }
  where $g$ is a probability density function with $\emph{supp}(g) = A$. Then, $\pb{f_{\phiv} : \phiv \in \Phiv_N}$ admits \Fequivalence{} for
  \eq{
  \Phiv_N \defeq \pb{\phiv = \pb{\phi_1,\phi_2,\dots,\phi_N} : \frac{1}{N} \sum\limits_{i=1}^N {\Psi_\mu\pa{\absdetj{\phi\inv_i}} = 1}}.\nonumber
  }
\end{corollary}

\begin{remark}
  For $t \in \R$, the excess mass function for $\fphiv$ is given by
  $
    \hat{f}_{\phiv}(t) = \hat{g}(Nt). 
  $
  Therefore, $N \in \Z_+$ is a fixed value in the family of distributions $\qty\big{\fphiv: \phiv \in \Phiv_N}$. In other words, if $f_{\phiv_1}, f_{\phiv_2}$ admit the representation in \eref{eq:discrete-density} and $f_{\phiv_1} \Feq f_{\phiv_2}$, then $|{\phiv_1}| = |{\phiv_2}|$.
\end{remark}

For completeness, the proof is provided in Section~\ref{proof:cor:discrete}. The following two examples illustrate \Fequivalence{} for distributions which admit the representation in Corollary~\ref{cor:discrete}.


\begin{figure}
  \centering
  \begin{subfigure}[b]{0.32\linewidth}
    \includegraphics[width=\linewidth]{\Root/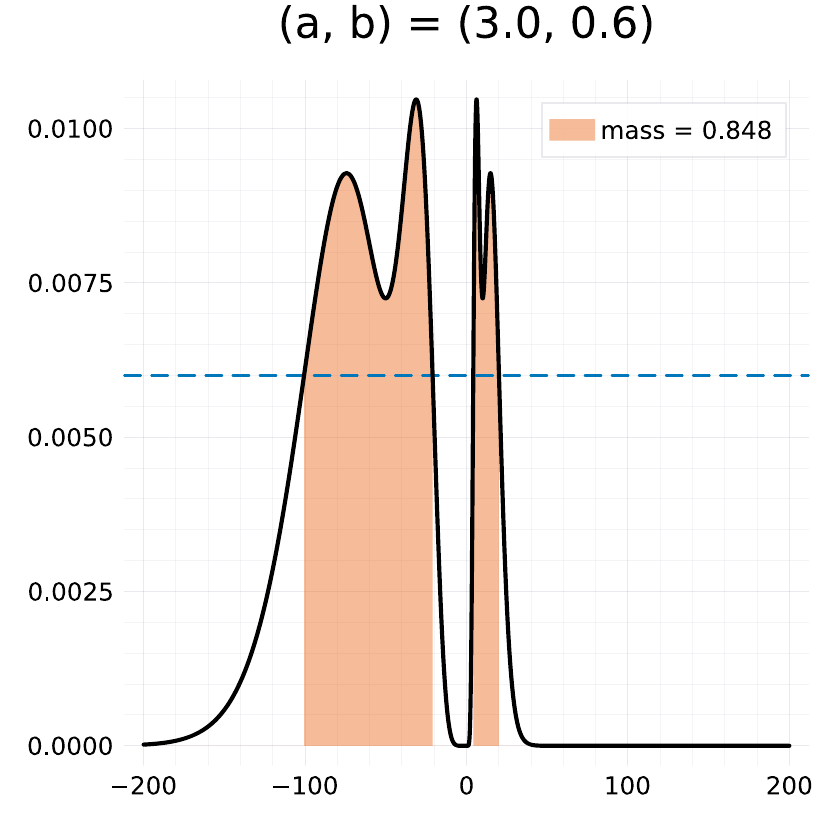}
  \end{subfigure}
  \begin{subfigure}[b]{0.32\linewidth}
    \includegraphics[width=\linewidth]{\Root/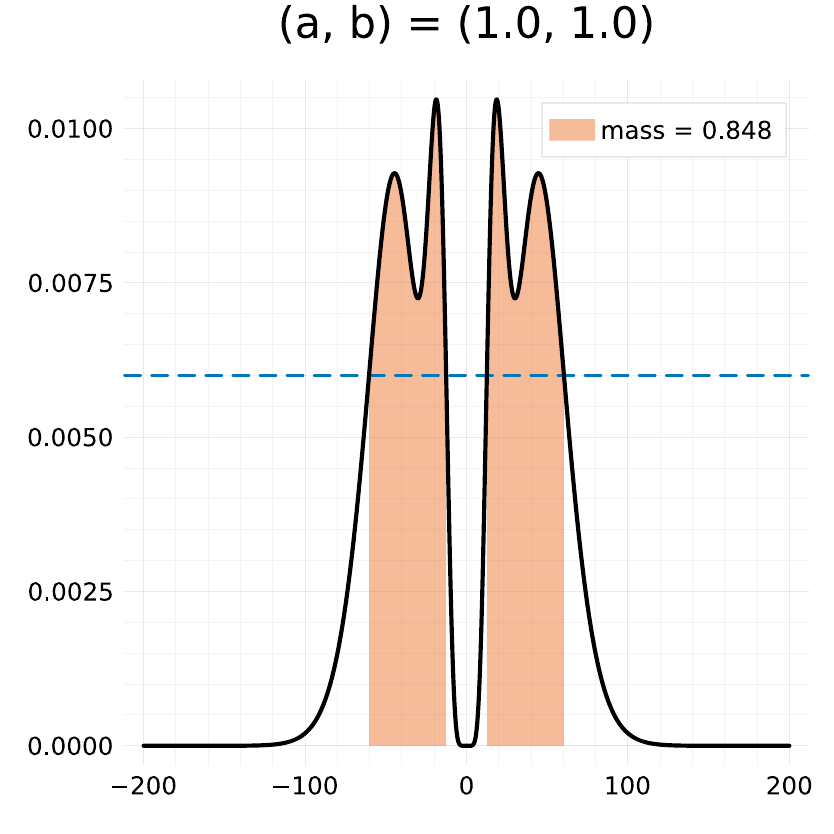}
  \end{subfigure}
  \begin{subfigure}[b]{0.32\linewidth}
    \includegraphics[width=\linewidth]{\Root/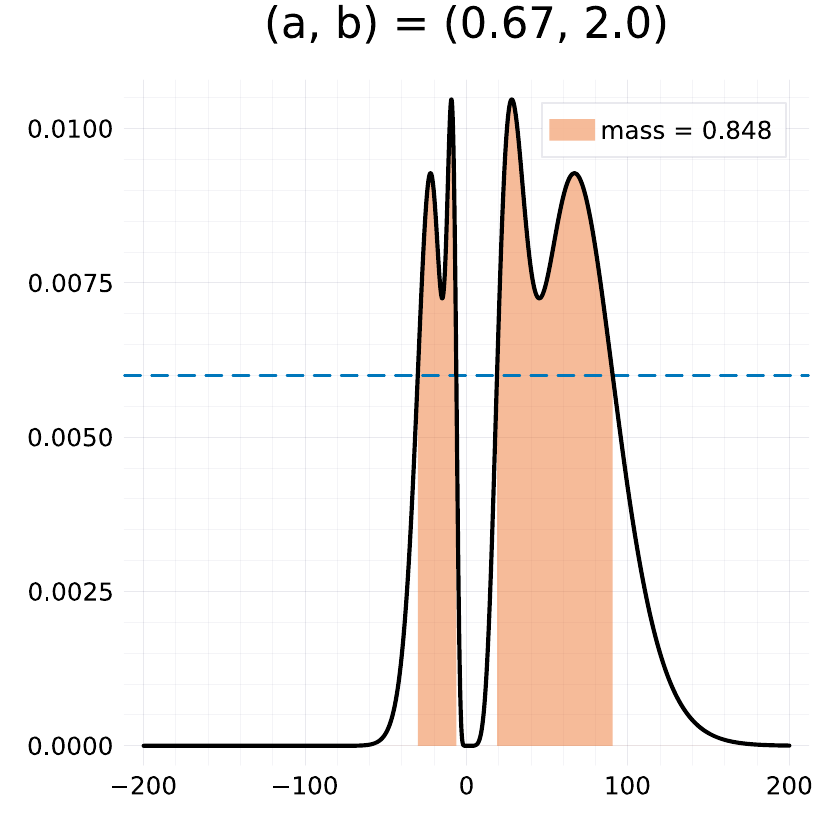}
  \end{subfigure}
  \caption{Illustration of \Fequivalence{} when $g \sim 0.5 \Gamma(10,5) + 0.5 \Gamma(10,2)$ is a mixture of Gamma distributions. For different values of $\theta \in \Theta$, the excess mass functions $\hat{f}_{\theta}(t)$ are identical, as shaded in orange for $t=0.006$.}
  \label{fig:invariance_univariate}
\end{figure}

\begin{example}\label{ex:mass_transport}
  For $\thetav = (\ta_1, \ta_2)$, let $\Theta \defeq \pb{(\ta_1, \ta_2) \in \Rpp : 1/\ta_1 + 1/\ta_2 = 2}$, and consider the family of distributions $\pb{\fthetav: \thetav \in \Theta}$ on $\X = \R$ given by
  \eq{
    \fthetav(x) = \begin{cases}
      g(\theta_1 x) / 2,  & \text{if } x > 0 \\
      g(-\theta_2 x) / 2, & \text{if } x \le 0
    \end{cases},
    \label{eq:mass_transport}
  }
  where $g$ is a probability density function on $\R_+$. Then, $\pb{\ftheta: \theta \in \Theta}$ admits \Fequivalence{}. This can be seen by representing $\X = \Rp \sqcup \R_- \sqcup \pb{0}$, where $A_1 = \R_+$ and $A_2 = \R_-$ are both diffeomorphic to $A = \R_+$, and $\pb{0}$ has measure zero. By taking $\pi(\xv) = \abs{x}$ to be the continuous surjection from $\X$ to $A$, and the maps $\phi_1, \phi_2 \in \Delta(\R_+)$ to be
  $$
  \phi_1(z) = \theta_1 z \qq{and} \phi_2(z) = \theta_2 z,
  $$
  the density function, $\fthetav$, is equivalently expressed as
  \eq{
    \fthetav(x) = \frac{1}{2} \sum_{i=1}^2 g\qty\Big( \phi_i(\abs{x} )) \cdot \mathbbm{1}(x \in A_i). \nn
  }
  Since $\phiv = \pb{\phi_1, \phi_2}$ is completely specified by $\thetav = (\theta_1, \theta_2)$, from Corollary~\ref{cor:discrete}, the Jacobian constraint is satisfied when
  \eq{
    \sum_{i=1}^2 \half \cdot \Psi_\mu\pa{\absdetj{\phi_i\inv}} = \frac{1}{2 \theta_1} + \frac{1}{2 \theta_2} = 1.\nn
  }
  Therefore, $\pb{\fthetav: \thetav \in \Theta}$ admits \Fequivalence{} from Corollary~\ref{cor:discrete}. See Figure~\ref{fig:invariance_univariate} for an illustration when $g$ is the density function associated with the mixture of Gamma distributions: $0.5 \cdot \Gamma(10, 5) + 0.5 \cdot \Gamma(10, 2)$.
\end{example}

\begin{example}\label{ex:mass_transport_2}
  For $\thetav = (\theta_1, \theta_2, \theta_3, \theta_4) \in \R_{++}^4$ consider the probability density $\fthetav$ on $\X = \R^2$ from \eref{eq:biv-chi} in Example~\ref{ex:biv-chi}. To see that $\pb{\fthetav: \theta_1 + \theta_2 + \theta_3 + \theta_4 = 4}$ admits \Fequivalence{}, consider the representation of $\X = \R^2$ as
  \eq{
    \X = \qty{\bigsqcup_{i=1}^4 A_{i}}  \bigsqcup \ \ \underbrace{\qty\Big{(x_1, x_2): x_1=0 \text{ or } x_2=0}}_{=: B},
  }
  where $B$ is a set of measure zero, and $A_1, A_2, A_3, A_4$ are the four (open) quadrants of~$\R^2$ which are each diffeomorphic to $A = \Rpp = \pb{(z_1, z_2) \in \R^2: z_1, z_2 > 0}$. 
  By taking $\pi(x_1, x_2) = (\abs{x_1}, \abs{x_2})$ to be the continuous surjection from $\X$ to $A$, consider the maps $\phi_i \in \Delta(\Rpp)$ to be given by $\phi_i(\zv) = R(\theta_i)\tr\zv$ where 
  $$
  R(\theta_i) = \begin{pmatrix}
    1/\sqrt{\theta_i} & 0               \\
    0               & 1/\sqrt{\theta_i} \\
  \end{pmatrix}.
  $$
 Then, $\fthetav$ is equivalently expressed as
  \eq{
    \fthetav(\xv) = \frac{1}{4} \sum_{i=1}^4 g\qty\Big( \phi_i(\pi(\xv))) \cdot \mathbbm{1}(\xv \in A_i).\nn
  }
  Similar to Example~\ref{ex:mass_transport}, $\phiv = \pb{\phi_1, \phi_2, \phi_3, \phi_4}$ is completely specified by $\thetav \in \R^4$, and the Jacobian constraint in \eref{eq:jac_constraint} is satisfied when
  \eq{
    \sum_{i=1}^4 \f 14 \absdetj{\phi\inv_i} = \sum_{i=1}^4 \frac{1}{4} \cdot \det\qty({R(\theta_i)\inv}) = \frac{1}{4} \cdot \sum_{i=1}^4 \theta_i = 1,\nn
  }
  and, therefore, the family $\pb{\fthetav: \theta \in \Theta}$ admits \Fequivalence{}. 
  \end{example}
}


While Theorem~\ref{thm:group_invariance} and Theorem~\ref{thm:nonlinear_invariance} provide conditions for $\P(\Theta)$ to admit \Fequivalence{} when $\ftheta$ has a specific form, we might ask: \emph{What happens to distributions that do not conform to the templates mentioned above}? The next result provides a necessary and sufficient geometric constraint (in the space of probability distributions) which $\P(\Theta)$ needs to satisfy for \Fequivalence{}.

\begin{theorem}
  For an open set $\Theta \subseteq \R^p$, let $\P(\Theta) = \pb{\ftheta : \thetav \in \Theta}$ be a family of distributions on $\X$. Then $\P(\Theta)$ admits \Fequivalence{} if and only if for all $1\le i\le p$,
  \eq{
    \f{\partial}{\partial \theta_i} \pa{ \int_{\X}{\fthetav^{k+1}(\xv)d\xv}} = 0,\ \text{for all} \ k \in \mathbb N_0.
    \label{eq:deriv-condition}
  }
  Moreover, if the gradient $\nabla_{\thetav}\fthetav$ exists a.e.-$\lambda_d$ and there exists a function $M \in L^1\pa{\X}$ such that for each $1 \le i \le p$ and for all $\theta \in \Theta$, $\abs{\f{\partial}{\partial \theta_i}\fthetav(\xv)} \le M(\xv)$ a.e.-$\lambda_d$, then $\P(\Theta)$ admits \Fequivalence{} if and only if
  \eq{
    \Big< \fthetav^k, \nabla_{\thetav} \fthetav \Big> _{L^2(\X)} = \zerov,\quad \text{for all } k \in \mathbb N_0.
    \label{eq:gradient}
  }
  \label{thm:orthogonal}
\end{theorem}

We collect the proof in Section~\ref{proof:thm:orthogonal}, and illustrate Theorem~\ref{thm:orthogonal} by verifying \Fequivalence{} for the family of distributions already studied in Example~\ref{ex:mass_transport}.


\begin{example}
  Consider the family of distributions on $\R^2$ from Example \ref{ex:spherical} in \eref{eq:spherical} given by,
  \eq{
    f_\rho(\xv) = g(r\cdot\xi_{\rho}(\theta)) = \f{1}{2\pi}\exp\pa{\f{-r^2}{2(1+\rho\cos(\theta))}},\nn
  }
  where $r = \norm{\xv}$ and $\tan(\theta) = x_2/x_1$. First, observe that $\rho \in [-1,1]$ contains an open set in~$\R$. Moreover, $f_\rho$ clearly satisfies the stochastic regularity assumptions in Theorem \ref{thm:orthogonal} by taking
  $$
    M(\xv) = \frac{r^2|\cos(\theta)|}{4\pi(1-|\cos(\theta)|)}\exp(-{r^2}/{4}).
  $$
  The derivative of $f_\rho$ is given by
  $$
    \f{\partial}{\partial \rho}f_\rho(\xv) = \f{r^2\cos(\theta)}{2(1+\rho\cos(\theta))^2} \cdot \f{1}{2\pi} \exp\pa{\f{-r^2}{2(1+\rho\cos(\theta))}}.
  $$
  Then, for any $k \ge 0$, \eref{eq:gradient} now becomes
  \eq{
  \Big< f_\rho^k, \f{\partial}{\partial \rho}f_\rho \Big> _{L^2(\R^2)} &=  \int\limits_{0}^{\infty}\int\limits_{0}^{2\pi}{  \f{r^2\cos(\theta)}{2(1+\rho\cos(\theta))^2} \cdot \f{1}{(2\pi)^{k+1}} \cdot \exp\pa{-\f{(k+1)r^2}{2(1+\rho\cos(\theta))}} \cdot rdrd\theta }\nn\\
  &\stackrel{\textup{(i)}}{=} \int_{0}^{2\pi}{\f{\cos(\theta) d\theta}{(k+1)^2\cdot(2\pi)^{k+1}}} = 0,\nn
  }
  where (i) follows from making the substitution $t = r^2$. It follows from Theorem~\ref{thm:orthogonal} that $\pb{f_\rho: \abs{\rho}<1}$ admits \Fequivalence{}.
\end{example}


\begin{example}
  For $\Theta = (1,\infty)$, consider the family of distributions on $\R$ given by
  \eq{
    \ftheta(x) = \half \ g(\theta x)\mathbbm{1}\pa{x \ge 0} + \half \ g\pa{\f{\theta x}{1-2\theta}}\mathbbm{1}\pa{x < 0},
    \label{eq:mass_trans2}
  }
  where $g$ is any density on $\R_+$ satisfying the assumptions of Theorem \ref{thm:orthogonal}. Then, $\pb{\ftheta : \theta \in \Theta}$ admits \Fequivalence{}. Vis-\`a-vis Example~\ref{ex:mass_transport}, the density in \eref{eq:mass_trans2} is a {reparametrization} of the density in \eref{eq:mass_transport} to ensure that $\Theta \subset \R$ is an open set. Observe that
  $$
    \int\limits_{\R}{\ftheta(x)dx} = \f{1}{2\theta} + \f{2\theta-1}{2\theta} = 1,
  $$
  implying that $\ftheta$ is a well defined density function for all $\theta > 1$. In order to verify the condition in \eref{eq:deriv-condition} note that
  {
  \eq{
  \f{\partial}{\partial\theta} \int_{\R}\ftheta^{k+1}(x) dx 
  &= \frac{1}{2^{k+1}}\ \cdot \f{\partial}{\partial \theta} \pa{\int_{\R}  g^{k+1}(\theta x)\mathbbm{1}\pa{x \ge 0}dx + \int_{\R} \ g^{k+1}\pa{\f{\theta x}{1-2\theta}} \mathbbm{1}\pa{x < 0}dx}\nn\\
  &\stackrel{\text{(i)}}{=} \frac{1}{2^{k+1}}\ \cdot \f{\partial}{\partial \theta} \pa{\pa{\f{1}{\theta} - \f{1-2\theta}{\theta}} \cdot \int\limits_{0}^{\infty}{g^{k+1}(t)dt}} = \f{\partial}{\partial\theta}\int\limits_{0}^{\infty}{\frac{1}{2^k} \cdot g^{k+1}(t)dt} = 0,\nn
  }
  }
  where (i) follows from taking $t=\theta x$ in the first integral and $t = {\theta x}/{1-2\theta}$ in the second integral.
  By Theorem \ref{thm:orthogonal}, this implies that $\pb{\ftheta : \theta \in \Theta}$ admits \Fequivalence{}.
  \label{ex:mass_trans2}
\end{example}


\section{Proofs}
\label{sec:proofs}
\allowdisplaybreaks

In this section, we present the proofs for the main results of this paper.


\subsection{Proof of Lemma~\ref{lem:inclusion}}
\label{proof:lem:inclusion}

Consider $\pr,\qr \in \P$ with their respective probability density functions $f$ and $g$,and consider $\Xv \sim f$ and $\Yv \sim g$. For every fixed function $\gamma_k$, the equivalence $f \Feq g$ implies that
$$
  \gamma_k(s f(\Xv)^{1/d}, t f(\Xv)^{1/d}) \distas \gamma_k(s g(\Yv)^{1/d}, t g(\Yv)^{1/d}), \qq{for all} 0 \le s < t.
$$
It follows that $\mu_k(\pr; s, t) = \mu_k(\qr; s, t)$ for all $0 \le s < t$. A similar argument also holds for $\varsigma_k$. This implies that $\beta_k(\pr;s, t) \distas \beta_k(\qr;s, t)$. Since this holds for each $k \ge 0$ we have that $\pr \beq \qr$, and the result follows.\QED


\subsection{Proof of Theorem~\ref{thm:group_invariance}}
\label{proof:thm:group_invariance}

\textit{Part (i).} Suppose $\Xv_\theta$ is a random variable with density $\ftheta$. Consider the random variable $\ytheta \defeq \gtheta\pa{\Psi\pa{\xtheta}}$ as a transformation of $\xtheta$, such that $\ftheta(\xtheta) = \phi(\ytheta)$. By Lemma~\ref{lem:inclusion}, if we can show that the distribution of $\ytheta$ does not depend on the parameter $\theta$ if and only if $\det\pa{\mathbf{D}{\Psi\inv}\pa{\yv}} = \zeta\pa{T\pa{\yv}}$ for some function $\zeta: \Tau \rightarrow \R$, then \Fequivalence{} for the family of distributions $\P(\Theta)$ follows.

The inverse transformation for $\ytheta$ is given by $\xv = \Psi\inv \circ \gtheta\inv(\yv)$. The existence of $\gtheta\inv$ is guaranteed by the group $\G$. The Jacobian for the inverse transformation can be simplified using the multivariable chain-rule,
\eq{
  \mathbf{D}\pa{\Psi\inv \circ \gtheta\inv}\pa{\yv} = \mathbf{D}{\Psi\inv}\pa{\gtheta\inv(\yv)} \cdot \mathbf{D}{\gtheta\inv}\pa{\yv}.\nn
}
Since $\G$ is a group of isometries, we have that, $\abs{\det\pa{\mathbf{D}{\gtheta\inv}\pa{\yv}}}=1$. The density of $\ytheta$ is expressed as
\eq{
  h_\theta(\yv) = \phi(\yv) \cdot \norm{ \mathbf{D}{\Psi\inv}\pa{\gtheta\inv(\yv)}}.\nn
}

It follows that density $h_\theta$ does not depend on $\theta$ if and only if {$\det\pa{\mathbf{D}{\Psi\inv}\pa{\gtheta\inv(\yv)}}$} does not depend on $\theta$, i.e., $\det\pa{\mathbf{D}{\Psi\inv}(\yv)}$ is $\G$-invariant. By Proposition \ref{lem:maximal_invariant}, this holds if and only if there exists some function $\zeta : \Tau \rightarrow \R$ such that
\eq{\label{eq:det-constraint}
  \det\pa{\mathbf{D}{\Psi\inv}(\yv)} = \zeta\pa{T(\yv)},\nn
}
where $T$ is $\G$-maximal invariant. Therefore, the distribution of $\ytheta$ doesn't depend on $\thetav$ if and only if the condition in \eref{eq:det-constraint} holds. Since $\phi$ is a fixed function and $\ftheta(\Xv_\theta) = \phi(\ytheta)$, this implies that \eref{eq:det-constraint} is also a necessary and sufficient condition for the distribution of $\ftheta(\Xv_\theta)$ to not depend on $\theta$. This concludes the proof for the first part of the Theorem~\ref{thm:group_invariance}.

\textit{Part (ii).} For ease of notation, let $\G = \mathop{\times}_{i=1}^{m}{\G_i}$ act on the space $\Y$. If $T_i: \Y_{i-1} \rightarrow \Y_{i}$ is a sequence of $\G_i$-compatible maximal invariants, then, for the second claim, from part (i), it suffices to show that
$$
  T(\yv) = T_m \circ T_{m-1} \circ \dots T_1(\yv)
$$
is $\G$-maximal invariant. The proof follows from induction. For the case $m=1$, $T(\yv) = T_1(\yv)$ by definition, so the property holds trivially. Assume that the property holds for $m > 1$. Then, $T' = T_m \circ T_{m-1} \circ \dots \circ T_1$ is $\G'$-maximal invariant, where $\G' = \times_{i=1}^{m}{\G_i}$.

Let $\G_{m+1}$ be a group acting on $\Y_m$ such that $T_{m+1} : \Y_m \rightarrow \Y_{m+1}$ is $\G_{m+1}$-maximal invariant. From the assumption that $T_{m+1}$ is $\G_{m+1}$-compatible, we also have that $T'$ is $\G_{m+1}$ compatible; therefore, we only need to show that $T = T_{m+1} \circ T'$ is $\G$-maximal invariant, where $\G = \G' \times \G_{m+1}$. Each element $g \in \G$ is given by $g = \pa{g',g_{m+1}}$ where $g' \in \G'$ and $g_{m+1} \in \G_{m+1}$. We can write $g$ as
\eq{
g = \pa{g',g_{m+1}} = \pa{g',e_{m+1}}*\pa{e',g_{m+1}} = \Tilde{g}'*\Tilde{g}_{m+1},\nn
}
where  $e'$ and $e_{m+1}$ are the identity elements of $\G'$ and $\G_{m+1}$ respectively, and $\Tilde{g}'$ and $\Tilde{g}_{m+1}$ are the representations for the group action of $\G'$ and $\G_{m+1}$ on $\Y$ as subgroups of $\G$. First, we examine that $T$ is $\G$-invariant. For each $\yv\in \Y$ we have that
\eq{
  T(g\yv) = T_{m+1}\circ T'(g \yv) = T_{m+1}\circ T'(\Tilde{g}'*\Tilde{g}_{m+1} \yv).
  \label{eq:product_groups1_appendix}
}
By the definition of the group action, we can write $\Tilde{g}'*\Tilde{g}_{m+1} \yv= \Tilde{g}'\zv$, where $\zv = \Tilde{g}_{m+1} \yv$. Since $T'$ is $\G'$-maximal invariant, we have that $T'(\Tilde{g}'\zv) = T'(\zv) = T'(\Tilde{g}_{m+1} \yv)$. Additionally, since $T'$ is $\G_{m+1}$-compatible, it follows that $T'(\Tilde{g}_{m+1} \yv) = g^*_{m+1} T'(\yv)$, where $g^*_{m+1}$ is the induced action of $\Tilde{g}_{m+1}$ on $\Y_m$ via $T'$. Lastly, using the fact that $T_{m+1}$ is $\G_{m+1}$-maximal invariant, \eref{eq:product_groups1_appendix} becomes
\eq{
  T(g\yv) = T_{m+1}\pa{g^*_{m+1} T'(\yv)} = T_{m+1} \circ T'(\yv) = T(\yv).\nn
}
Next, let $\xv$ and $\yv$ be such that $T(\xv) = T_{m+1} \circ T'(\xv) = T_{m+1} \circ T'(\yv) = T(\yv)$. Since $T_{m+1}$ is maximally invariant, there exists $g_{m+1} \in \G_{m+1}$ such that $g^*_{m+1}T'(\xv) = T'(\yv)$. From the $\G_{m+1}$-compatibility of $T'$ we have that $g^*_{m+1}T'(\xv) = T'(\Tilde{g}_{m+1}\xv)$, giving us $T'(\yv) = T'(\Tilde{g}_{m+1}\xv)$. Lastly, since $T'$ is $\G'$-maximal invariant, there exists $\Tilde{g}'$ such that $\Tilde{g}'\pa{\Tilde{g}_{m+1}\xv} = \yv$. This implies that there exists $g \in \G$ such that
\eq{
  g\xv= \Tilde{g}'*\Tilde{g}_{m+1}\xv= \Tilde{g}'\pa{\Tilde{g}_{m+1}\xv} = \yv.\nn
}
Therefore, $T(\xv) = T(\yv)$ if and only if $\xv \in \G\yv$, from which it follows that $T = T_{m+1} \circ T'$ is $\G$-maximal invariant.\QED

{


\allowdisplaybreaks
\subsection{Proof of Lemma~\ref{lem:excess_mass}}
\label{proof:lem:excess_mass}

Consider $\Xv \sim f$ and $\Yv \sim g$ for $f, g \in \P$, and let $Z_{\Xv} = f(\Xv)$ and $Z_{\Yv} = g(\Yv)$ be the transformation of $\Xv$ and $\Yv$ under their own density. Then, for $t \ge 0$
\eq{
\hat{f}(t) = \pr( f(\Xv) \ge t ) = \E( \mathbbm{1}(f(\Xv) > t) ) = \E(\mathbbm{1}(Z_{\Xv} > t)) = 1 - F_{Z_{\Xv}}(t),\nn
}
where $F_{Z_{\Xv}}(t)$ is the cumulative distribution function of $Z_{\Xv}$. Similarly, $\hat{g}(t) = 1 - F_{Z_{\Yv}}(t)$. 

For the first claim, note that if $Z_{\Xv} \distas Z_{\Yv}$, then $F_{Z_{\Xv}}(t) = F_{Z_{\Yv}}(t)$ for all $t \ge 0$, which implies that $\hat f = \hat g$. Conversely, if $\hat f = \hat g$, then $F_{Z_{\Xv}} = F_{Z_{\Yv}}$, which implies that $Z_{\Xv} \distas Z_{\Yv}$. Therefore, $f \Feq g$ if and only if $\hat f = \hat g$. The second claim now follows from Lemma~\ref{lem:inclusion}. \QED


\subsection{Proof of Theorem~\ref{thm:nonlinear_invariance}}
\label{proof:thm:nonlinear_invariance}

Let $\lambda = \nu \otimes\loc \mu$ be the local product measure induced in $\X$, and let $C = 1/\nu(\Y)$. First, we verify that $f_\phi$ as defined in \eref{eq:fiber} is a well-defined probability density function for each $\phi \in \Phi$. From \citet[Eq.~6]{goetz1959measures}

\begin{align*}
  \int_{\X} f_\phi d\lambda
  &=  \int_{\z} \int_{\pi\inv(\zv)}^{} C \cdot g\qty\Big( \phi\qty\big( \psi\jz\inv(\wv), \zv ) ) d\nuz(\wv) d\mu(\zv)\nn\\
  &\stackrel{\text{(i)}}{=}  \int_{\z} \int_{\psi\jz\inv \circ \pi\inv(\zv)}^{}  C \cdot g\qty( \phi\qty\big( \yv, \zv ) ) d\nuz\qty\big(\psi\jz(\yv)) d\mu(\zv)\nn\\
  &\stackrel{\text{(ii)}}{=}  \int_{\z} \int_{\Y}^{}  C \cdot g\qty(\phi_\yv(\zv)) d\nu\qty(\yv) d\mu(\zv)\nn\\
  &\stackrel{\text{(iii)}}{=}  \int_{\z} \int_{\Y}^{} C \cdot g\qty(\uv) d\nu\qty(\yv) d\mu\qty(\phi_\yv\inv(\uv))\nn\\
  &\stackrel{\text{(iv)}}{=} \int_{\z} \int_{\Y}^{} C \cdot g\qty(\uv) d\nu\qty(\yv) \Psi\qty(\absdetj{\phi_\yv\inv}) d\mu\qty(\uv)\nn\\
  &\stackrel{\text{(v)}}{=} \int_{\Y}^{}  C \cdot \Psi\qty(\absdetj{\phi_\yv\inv}) \qty{\int_{\z}  g\qty(\uv) d\mu\qty(\uv) } d\nu\qty(\yv) \nn\\
  &= \int_{\Y}^{}  C \cdot \Psi\qty(\absdetj{\phi_\yv\inv}) d\nu\qty(\yv) = 1,  \nn\\
\end{align*}

where (i) follows from making the substitution $\yv = \psi\jz\inv(\wv)$, (ii) follows from noting that $\psi\jz\inv\qty(\pi\inv(\zv)) = \Y$ by definition of the local trivialization and the pushforward measure $\nuz$ is defined to be $\nuz(B) = \nu(\psi\jz\inv(B))$ for all measurable sets $B \subseteq \pi\inv(\zv)$. Similarly, (iii) follows by making the substitution $\uv = \phi_\yv(\zv)$, (iv) follows from the fact that $\phi_\yv \in \Delta(\z)$ for every $\yv \in \Y$ and the modular character of $\mu$ is $\Psi$, and (v) follows from Tonelli's theorem \citep[Theorem~2.7]{folland1999}. Next, it remains to verify that $\fhat_\phi(t)$ does not depend on $\phi$, and we use the same machinery as before. Consider,

\eq{
  \fhat_\phi(t)
  &= \int_{\X}{\mathbbm{1}(f_{\phi}(\xv) \ge t) \ f_{\phi}(\xv) d\lambda(\xv)}\nn\\
    &= \int_{\z}\int_{\pi\inv(\zv)}{\mathbbm{1}\qty\bigg({ C \cdot }{g\qty\Big( \phi( \psi\jz\inv(\wv), \zv ) )} \ge t) \ {\ C \cdot \ } g\qty\Big( \phi( \psi\jz\inv(\wv), \zv ) ) \cdot d\nuz(\wv)} d\mu(\zv).\nn
  }

  Again, substituting $\yv = \psi\jz\inv(\wv)$, and then taking $\uv = \phi_\yv(\zv)$ we get
  \eq{
    \fhat_\phi(t)
    &= \int_{\z}\int_{\Y}{\mathbbm{1}\qty({ C \cdot } {g(\uv)} \ge t) \ { C \cdot } g(\uv) \ \Psi(\absdetj{\phi_\yv\inv}) \cdot d\nu(\yv)} d\mu(\uv)\nn\\
    &= \int_{\z}{\mathbbm{1}\qty({ C \cdot } {g(\uv)} \ge t) \ g(\uv) \ \qty{\int_\Y {\ C \cdot \ } \Psi(\absdetj{\phi_\yv\inv}) \cdot d\nu(\yv)}} d\mu(\uv)\nn\\
    &= \int_{\z}{\mathbbm{1}\qty({ C \cdot } {g(\uv)} \ge t) \ g(\uv)} d\mu(\uv) = \hat{g}(t/C),\nn
  }
  which does not depend on the choice of $\phi \in \Phi$, and \Fequivalence{} follows.\QED

  \subsection{Proof of Proposition~\ref{prop:coordinate-equivalence}}
  \label{proof:prop:coordinate-equivalence}

  For a fixed $\phi: \Y \times \z \rightarrow \z$, let $\fphi$ and $\ftphi$ be the density functions given by \eref{eq:fiber} and \eref{eq:fiber-alt}:
  \eq{
    f_\phi(\xv) = {\ C \cdot \ }g\qty\Big( \phi\qty\big( \psi\jz\inv(\xv), \zv ) ), \quad \text{and} \quad \ftphi(\xv) = {\ C \cdot \ }g\qty\Big( \phi\qty\big( \eta\iz\inv(\xv), \zv ) ),\nn
  }
  corresponding to the local trivializations $\pb{(V_j,\psi_j) : j \in J}$ and $\pb{(U_i,\eta_i) : i \in I}$, respectively. Without loss of generality, we may take $C=1$ for ease of notation. For a measurable set $A \subset \X$, and for $i \in I$ and $j \in J$ such that $\zv \in \pi(A) \cap U_i \cap V_j$, using \eref{eq:fubini} we get
  \eq{
    \int_{A}\fphi d\lambda = \int_{\pi(A)} \int_{\Az} \fphi d\nuz d\mu,
  }
  where $\Az = A \cap \pi\inv(\zv)$. Therefore, in order to establish the claim, it suffices to show that
  \eq{
    \int_{\Az} \ftphi d\nut_\zv = \int_{\Az} \fphi d\nu_\zv.\nn
  }
  To this end, we have
  \eq{
    \int_{\Az} \ftphi d\nut_\zv
    &\stackrel{\text{(i)}}{=} \int_{\Az} g\qty(\eta\iz\inv(\wv), \zv) d\nuz(\wv) \nn\\
    &= \int_{\eta\iz\inv(\Az)} g\qty\big(\phi(\yv, \zv)) d\nu(\yv)\nn\\
    &= \int_{g\ijz\inv \circ \eta\iz\inv(\Az)} g\qty\big(\phi(\uv, \zv)) d\nu(\uv),\nn
  }
  where (i) follows from the substitution $\yv = \eta\iz\inv(\wv)$, and the last equality follows by taking $\uv = g\ijz\inv(\yv)$ for $g\ijz \in G$ and by noting that $\nu$ is $G$-invariant. Using the ``change of coordinates'' in \eref{eq:charts}, we have $\psi\jz\inv = g\ijz\inv \circ \eta\iz\inv$, and therefore,
  \eq{
    \int_{\Az} \ftphi d\nut_\zv
    &= \int_{\psi\jz\inv(\Az)} g\qty\big(\phi(\uv, \zv)) d\nu(\uv)\nn\\
    &= \int_{\Az} g\qty\big(\phi( \psi\jz\inv(\wv), \zv)) d \qty\big((\psi\jz)_\#\nu)(\wv)\nn\\
    &= \int_{\Az} g\qty\big(\phi( \psi\jz\inv(\wv), \zv)) d \nuz(\wv) = \int_{\Az} \fphi d\nuz, \nn\\
  }
  which proves the claim.\QED

}


{
\subsection*{Proof of Corollary~\ref{cor:discrete}}
\label{proof:cor:discrete}

Let $\X\open = \sqcup_{i=1}^n A_i = \X \setminus \cup_{j=1}^m B_j$. First, note that since $\mu(B_j) = 0$ for each $j \in \pb{1, 2, \dots, m}$, it follows that $\mu\qty(\cup_{j=1}^m B_j) = 0$. Therefore, from Remark~\ref{remark:nonlinear}~\ref{remark:measure-zero}, omitting $\cup_{j=1}^m B_j$ doesn't affect \Fequivalence{}, and it suffices to show that the claim holds for $\X\open$.

To this end, observe that $\X\open$ can be represented as the fiber bundle $\e = \pa(\X\open, \Y, \z, \pi, G)$ where $\z = A$, $\Y = \pb{1, 2, \dots, N}$ and $G$ is isomorphic to $\mathbb{Z}_N$ acting on $\Y$ by addition modulo $N$. Indeed, since $\pi$ is assumed to be a continuously surjective map from $\X$ to $A$, it suffices to show that $\X$ is locally trivializable. Let $V = A$, and for $\pi\inv(A) = \sqcup_{i=1}A_i$ let $\psi: A \times \Y \to \pi\inv(A)$ be given by $\psi(A \times \pb{i}) = \pi\inv(A) \cap A_i = A_i$ for each $i = 1, 2, \dots, N$. Since each $A_i$ is diffeomorphic to $A$, it follows that $\psi$ is a diffeomorphism. Additionally, for $\av \in A$, the map $\psi\inv_{\av}(\xv) = \pb{i: \xv \in A_i}$. Therefore, $\pb{(V, \psi)}$ is a local trivialization, and $\e$ is a fiber bundle. 

Let $\nu$ be the counting measure on $\Y$, and $C = 1/N = 1/\nu(\Y)$. For $\zv = \pi(\xv)$, we may write the density function in \eref{eq:discrete-density}
\eq{
  \fphiv(\xv) = C \cdot g\qty\Big( \phi\qty\big( \psi_{\zv}\inv(\xv), \zv ) ),\nn
}
where the map $\phi(i, \av) = \phi_i(\av)$. Furthermore, the Jacobian constraint~\eref{eq:jac_constraint} requires that
\eq{
  \int_{\Y} C \cdot \Psi_\mu\qty(\absdetj{\phi(i, \cdot)\inv}) d\nu(i) = \sum_{i=1}^N \frac{1}{N} \cdot \Psi_\mu\qty(\absdetj{\phi_i\inv}) = 1.\nn
}
Therefore, from Theorem~\ref{thm:nonlinear_invariance}, it follows that $\pb{\fphiv: \phiv \in \Phiv}$ admits \Fequivalence{}.
\QED

}


{
\subsection{Proof of Theorem~\ref{thm:orthogonal}}
\label{proof:thm:orthogonal}

For each ${\thetav} \in \Theta$, let $\Xv_{\thetav}$ be a random variable with density $\fthetav$. From Lemma~\ref{lem:inclusion}, we know that if the distribution of $Z_{\thetav} \defeq \fthetav(\Xv_{\thetav})$ does not depend on~${\thetav}$, \Fequivalence{} follows. The characteristic function for $Z_{\thetav}$ is given by
\eq{
  \varphi_{\thetav}(t) = \E_{Z_{\thetav}}\pa{e^{itZ_{\thetav}}} = \E_{\xtheta}\pa{e^{it \fthetav(\xtheta)}} = \int_{\X}{e^{it \fthetav(\xv)} \fthetav(\xv) d\xv},\nonumber
}
for $t \in \R$. Using Euler's formula, we can write
\eq{
  \varphi_{\thetav}(t) = \int_{\X}{{\cos(t \fthetav(\xv))} \fthetav(\xv) d\xv} \ \ + \ i \cdot \int_{\X}{{\sin(t \fthetav(\xv))} \fthetav(\xv) d\xv}.\nn
}
Using the Taylor series representation for $\cos\pa{t \fthetav(\xv)}$ and $\sin\pa{t \fthetav(\xv)}$ we get,
\eq{
  \varphi_{\thetav}(t) 
  &= 
  \int_{\X}{\pa{\sum_{k=0}^\infty{\f{(-1)^k}{(2k)!}t^{2k}\fthetav^{2k}(\xv)}}\fthetav(\xv) d\xv} + 
  i \cdot \int_{\X}{\pa{\sum_{k=0}^\infty{\f{(-1)^k}{(2k+1)!}t^{2k+1}\fthetav^{2k+1}(\xv)}}\fthetav(\xv) d\xv}\nn\\
  &\stackrel{\text{(i)}}{=} 
  \sum_{k=0}^\infty \f{(-1)^k t^{2k}}{(2k)!} \int_{\X}{{{\fthetav^{2k}(\xv)}}\fthetav(\xv) d\xv} + 
  i \cdot \sum_{k=0}^\infty \f{(-1)^k t^{2k+1}}{(2k+1)!} \int_{\X}{{\sum_{k=0}^\infty{\fthetav^{2k+1}(\xv)}}\fthetav(\xv) d\xv}\nn\\ 
  &=
  \sum_{k=0}^\infty \f{(-1)^k t^{2k}}{(2k)!} h_{2k}(\thetav) + 
  i \cdot \sum_{k=0}^\infty \f{(-1)^k t^{2k+1}}{(2k+1)!} h_{2k+1}(\thetav)\nn,
}
where $h_k({\thetav}) \defeq \int_{\X}{\fthetav^{k+1}(\xv)d\xv}$ and (i) follows from Fubini's theorem. It follows from this that the characteristic function $\varphi_{\thetav}$ does not depend on ${\thetav}$ if and only if $\textup{Re}\qty(\varphi_{\thetav}(t))$ and $\textup{Im}\qty(\varphi_{\thetav}(t))$ do not depend on $\thetav$ for all $t \in \R$. Therefore, $\varphi_{\thetav}$ does not depend on ${\thetav}$ if and only if the function $h_k({\thetav})$ does not depend on ${\thetav}$ for each $k \in \mathbb N_0$. Equivalently, for each $k \in \mathbb N_0$ we must have that $\f{\partial}{\partial \theta_i} h_k({\thetav}) = 0$ for all $1\le i\le p$, {i.e.,}
$$
  \f{\partial}{\partial \theta_i} h_k({\thetav}) = \f{\partial}{\partial \theta_i} \pa{ \int_{\X}{\fthetav^{k+1}(\xv)d\xv}} = 0.\nn
$$

Under the additional stochastic regularity conditions, using the Lebesgue-dominated convergence theorem we have
\eq{
  \f{\partial}{\partial \theta_i} \pa{ \int_{\X}{\fthetav^{k+1}(\xv)d\xv}} &=  \int_{\X} \f{\partial}{\partial \theta_i}{\fthetav^{k+1}(\xv)d\xv} = \Big< \fthetav^k, \f{\partial}{\partial \theta_i}\fthetav \Big> _{L^2(\X)} = 0,\nn
}
from which the second claim follows.\QED
}
\section{Discussion}
\label{sec:discussion}

In this work, we have studied the framework of topological inference through the lens of classical statistical theory. In the parametric setup, we have investigated cases when the parameters of the statistical model are not sufficient for statistical inference based on their asymptotic limit in the thermodynamic regime. In our case, this is analogous to the property of \betaequivalence{}. We have characterized several conditions under which a parametric family of distributions admits \Fequivalence{}, which also guarantees \betaequivalence{}. When the distributions share an algebraic structure, we are able to describe necessary and sufficient conditions under which this asymptotic identifiability fails. In the absence of the underlying algebraic structure, we have shown that when the distributions satisfy a certain Jacobian constraint, they admit \Fequivalence{}. Lastly, in the absence of any of the above, when the distributions are stochastically regular (as is most often the case), we have shown that if the density function shares a certain geometry with its gradient, then \Fequivalence{} follows. 

As noted in Remark~\ref{remark:betaequivalence}, studying injectivity for Betti numbers collectively serves as a stepping-stone to understanding the behavior of more complex topological invariants, and we have focused on the phenomenon of \Fequivalence{} in the thermodynamic regime. Analogous asymptotic behavior for Betti numbers in the sparse regime has been the focus in \cite{kahle2011random,bobrowski2015topology,Yogeshwaran_2015}. For fixed $1 \le k \le d-1$, in the regime that $nr^d_n \rightarrow 0$ and $n^{(k+2)}r_n^{d(k+1)} \rightarrow \infty$, the following limit theorem is established for random \cech{} complexes in \mbox{\cite[Theorem 3.2]{kahle2011random}} and \cite[Theorem 4.1]{bobrowski2015topology}:
\eq{
  \frac{\beta_k(\K(\Xn, r_n))}{\sqrt{n^{(k+2)}r_n^{d(k+1)}}} \stackrel{d}{\rightsquigarrow} \N(\overline\mu_k, \overline\sigma_k^2),
\label{eq:sparse-limit}
}
where, for fixed quantities $\zeta_k$ and $\xi_k$, the functionals $\mu_k$ and $\sigma_k^2$ are given by
\eq{
  \overline\mu_k = \frac{1}{(k+2)!}\int_\X \zeta_k f(\xv)^{k+2}d\xv, \quad and \quad \overline\sigma^2_k = \frac{1}{(k+1)!}\int_\X \xi_k f(\xv)^{k+1}d\xv. \nn
}
From Theorem~\ref{thm:orthogonal}, we can see that if a family of distributions admit \betaequivalence{}, then for each $1 \le k \le d-1$, in the regime that $nr^d_n \rightarrow 0$ and $n^{(k+2)}r_n^{d(k+1)} \rightarrow \infty$, the limit on the r.h.s.~of Eq.~\eqref{eq:sparse-limit} is identical. A similar conclusion for the Betti numbers of random Rips complexes also follows from \cite[Theorem 3.1]{kahle2011random} in the regime that $n^{(2k+2)}r_n^{d(2k+1)} \rightarrow \infty$. As noted in \cite[page 344]{bobrowski2014topology} this difference in the regimes stems from the fact that the smallest nontrivial $H_k$ cycle in the Rips complex is supported on $2k+2$ vertices as opposed to the $k+2$ vertices required for the \cech{} complex. The results in \cite{bobrowski2022homological} suggest a useful direction for pursuing this line of investigation and is left for future work.

Lastly, it is important to note that the conditions characterized in this work are purely statistical in nature, and hold in the asympotic setting. However, there is still hope that topological summaries for statistical inference from \Fequivalent{} families of distributions may be useful in finite samples and is a promising direction for future work. We hope that this work will serve as a stepping-stone for further investigations in this direction.
\endgroup
\appendix



\numberwithin{equation}{section}
\section{Supplementary Results}
\label{supplementary}

\begin{lemma}
  Suppose $f$ is a probability density function with mean $0$ and variance $1$. For each $d \in \mathbb{Z}_{+}$, let $\xv \in \R^d$ and $\zeta_d : \R_+ \rightarrow \R_+$. Then $\prod_{i=1}^d f(x_i) = \zeta_d(\norm{\xv}^2)$
  holds for each $d \in \mathbb N_0$ if and only if $f(x) = \exp\qty\big({-{x^2}/{2}})/{\sqrt{2\pi}}$, for each $x \in \R$.
  \label{lemma:normal_density}
\end{lemma}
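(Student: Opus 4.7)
\medskip

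\noindent\textbf{Proof proposal for Lemma \ref{lemma:normal_density}.}

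The plan is to observe first that sufficiency is immediate: if $f(x)=(2\pi)^{-1/2}e^{-x^{2}/2}$, then $\prod_{i=1}^{d}f(x_{i})=(2\pi)^{-d/2}e^{-\|\mathbf{x}\|^{2}/2}$, which depends on $\mathbf{x}$ only through $\|\mathbf{x}\|^{2}$, so taking $\zeta_{d}(s)=(2\pi)^{-d/2}e^{-s/2}$ does the job. Necessity is the substantive direction, and it turns out we only need the hypothesis at $d=2$: writing the relation as
\eq{
f(x)f(y) = \zeta_{2}(x^{2}+y^{2}), \qquad \forall x,y \in \R,\nonumber
}
I will extract the functional form of $f$ directly from this equation.

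Next, I will evaluate at $y=0$ to get $f(x)f(0)=\zeta_{2}(x^{2})$. Before using this, I need to dispose of the degenerate case $f(0)=0$: if $f(0)=0$ then $\zeta_{2}(x^{2})\equiv 0$, which forces $f(x)f(y)=0$ for all $(x,y)$, contradicting $\int f\,dx=1$. With $f(0)>0$ in hand, substituting back yields the multiplicative identity
\eq{
f(x)f(y) = f(0)\, f\!\pa{\sqrt{x^{2}+y^{2}}}, \qquad \forall x,y \in \R.\nonumber
}
From this I immediately read off $f(-x)=f(x)$ (swap $x\mapsto -x$ and use $f(y)>0$ on a set of positive measure), so $f$ has mean zero automatically. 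Setting $\phi(t) \defeq f(\sqrt{t})/f(0)$ for $t\ge 0$ and $s=x^{2}, t=y^{2}$ produces Cauchy's exponential equation
\eq{
\phi(s+t) = \phi(s)\phi(t), \qquad \forall s,t \ge 0.\nonumber
}

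The main technical step is now to solve this functional equation. Since $f$ is a probability density it is Lebesgue measurable, and hence so is $\phi$; by the standard theorem on measurable solutions of Cauchy's equation, every nonnegative measurable solution is of the form $\phi(t)=e^{ct}$ for some $c \in \R$. This gives $f(x)=f(0)\,e^{cx^{2}}$. Integrability of $f$ forces $c<0$; writing $c=-a$ with $a>0$, the normalization $\int_{\R} f(x)\,dx=1$ pins $f(0)=\sqrt{a/\pi}$, and the second-moment condition $\E(\xi^{2})=1$ gives $a=1/2$, so $f(x)=(2\pi)^{-1/2}e^{-x^{2}/2}$, as claimed.

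The step I expect to be most delicate is the reduction to a regular functional equation: I must rule out $f(0)=0$ and, more subtly, ensure that $\phi$ is well-defined and measurable on $[0,\infty)$ despite $f$ being specified only as a density (hence only defined a.e.). One clean way around this is to first replace $f$ by a continuous representative on the support where the product identity is valid, exploiting the fact that the right-hand side $\zeta_{2}(x^{2}+y^{2})$ transports any regularity of $\zeta_{2}$ back to $f$ via $y=0$. Once measurability (or even local boundedness) is in place, invoking the measurable-Cauchy theorem closes the argument.
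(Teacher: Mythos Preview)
Your proof is correct and takes essentially the same route as the paper's: both evaluate the product identity along a coordinate axis to express $\zeta$ in terms of $f$, reduce to Cauchy's multiplicative functional equation, and identify the exponential solution using the moment constraints. The only minor differences are that you work entirely at $d=2$ (the paper also invokes $d=1$ to write $f(x)=\zeta_{1}(x^{2})$ and then inducts, though this is not really needed), and that you explicitly dispose of the degenerate case $f(0)=0$ and the measurability hypothesis for Cauchy's equation, which the paper leaves implicit.
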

\prf{
The sufficient condition follows unambiguously by plugging in the value for $f(x_i)$, i.e., when $f(x) = \exp\qty\big({-{x^2}/{2}})/{\sqrt{2\pi}}$ it follows that
\eq{
\prod_{i=1}^d f(x_i) = \pa{{2\pi}}^{-{d}/{2}} \exp\pa{-\norm{\xv}^2 / 2} \defeq \zeta_d(\norm{\xv}^2).\nn
}
It remains to verify the necessary condition. To begin, consider the case when $d=1$. For $x \in \R$, we have $f(x) = \zeta_1(x^2)$. Define $k\defeq f(0) = \zeta_1(0)$. We now proceed to consider the case when $d=2$. For $(x,0) \in \R^2$, we have $f(x) \cdot f(0) = \zeta_2(x^2)$. Since $f(x) = \zeta_1(x^2)$ and $f(0)=k$, it follows that $\zeta_2(x^2) = k\zeta_1(x^2)$ for all $x \in \R$. By induction, for each $d \in \mathbb{Z}_+$, we have
\eq{
  \zeta_d(x^2) = k^{d-1} \zeta_1(x^2).
  \label{eq:zeta1}
}
{Thus, for any $\xv \in \R^d$}, Eq.~\eqref{eq:zeta1} implies
\eq{
  \prod_{i=1}^d f(x_i) = \zeta_d(x_1^2 + x_2^2 + \dots + x_d^2) = k^{d-1} \zeta_1(x_1^2 + x_2^2 + \dots + x_d^2).\nonumber
}
We also have that $\prod_{i=1}^d f(x_i) = \prod_{i=1}^d \zeta_1(x_i^2)$. Define $g(x) = \f{\zeta_1(x)}{k}$, which implies $g$ satisfies
\begin{equation}
  \label{eq:zeta3}
  \prod_{i=1}^d g(x_i^2) = g(x_1^2 + x_2^2 + \dots + x_d^2).
\end{equation}
Eq.~(\ref{eq:zeta3}) holds if and only if $g(x^2) = e^{\beta x^2}$, for some fixed $\beta \in \R$, which implies
\eq{
  f(x) = \zeta_1(x^2) = k\cdot g(x^2) = k\cdot e^{\beta x^2}.\nn
}
However, $f$ is a probability density function on $\R$ with mean $0$ and variance $1$, i.e, $\int_{\R}{f(x)dx} = 1$, $\int_{\R}{xf(x)dx} = 0$ and $\int_{\R}{x^2f(x)dx} = 1$.
This yields $\beta = -\f{1}{2}$, $k = \f{1}{\sqrt{2\pi}}$ and the result follows.
}


{
  \forcecommand{\pv}{\boldsymbol{p}}
  \begin{example}[Example~\ref{ex:mobius} Continued]
    \label{ex:mobius-appendix}

    \begin{figure}
      \centering
      \begin{subfigure}[b]{\linewidth}
        \includegraphics[width=0.5\linewidth]{\Root/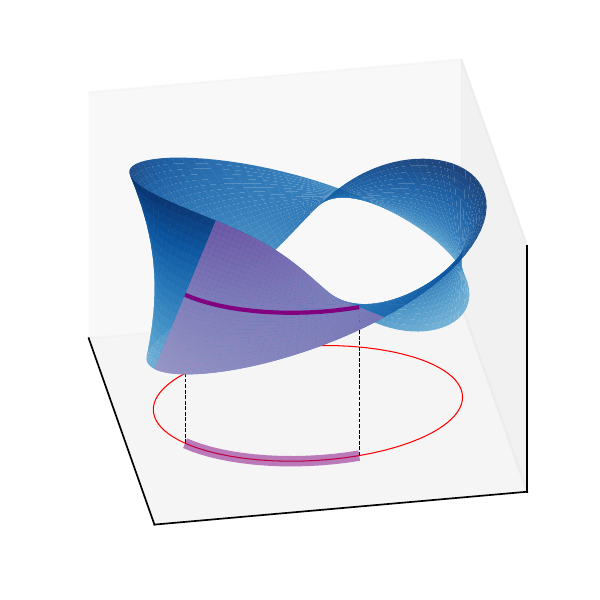}
        \includegraphics[width=0.5\linewidth]{\Root/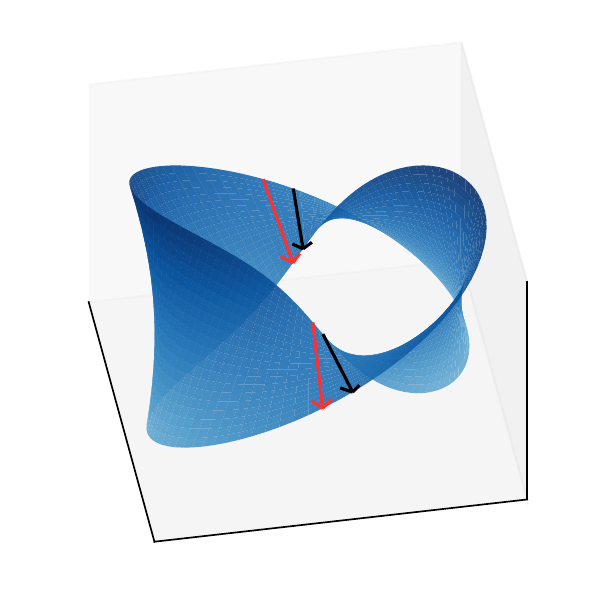}
        \caption{An illustration of the space $X$---the Möbius band with the full twist. (Left) The projection map $\pi$ onto a neighborhood $U \subset [0, 2\pi]/\sim = \z$. (Right) The map $\psi_\theta(t)$ for $t \in [-0.5, 0.5]$ preserves length and orientation, unlike the Möbius band with a half-twist. Therefore, the measure induced on the fiber over $\theta$, $\pi\inv(\theta)$, is also a unit uniform distribution.}
        \label{fig:mobius-appendix-a}
      \end{subfigure}

      \begin{subfigure}[t]{0.97\linewidth}
        \includegraphics[width=0.32\linewidth]{\Root/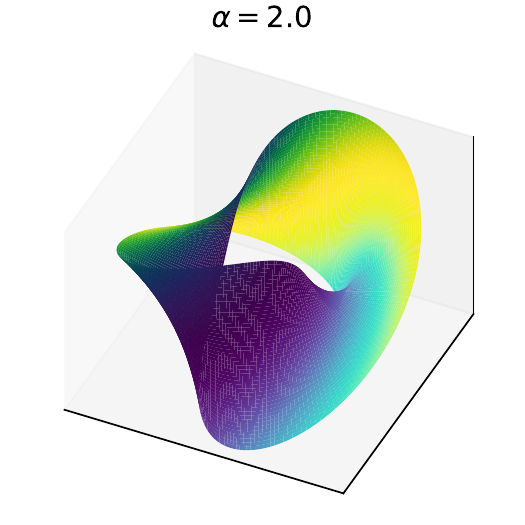}
        \includegraphics[width=0.32\linewidth]{\Root/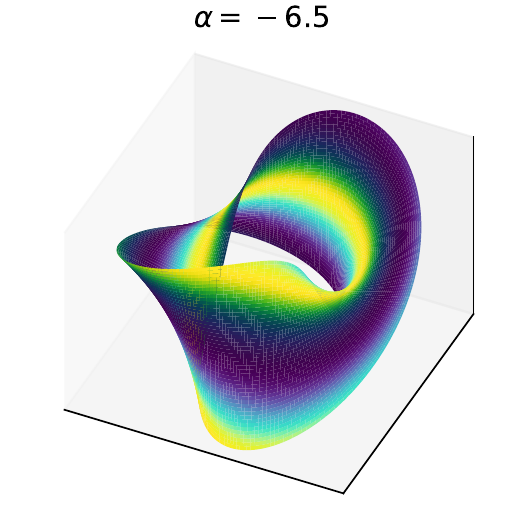}
        \includegraphics[width=0.32\linewidth]{\Root/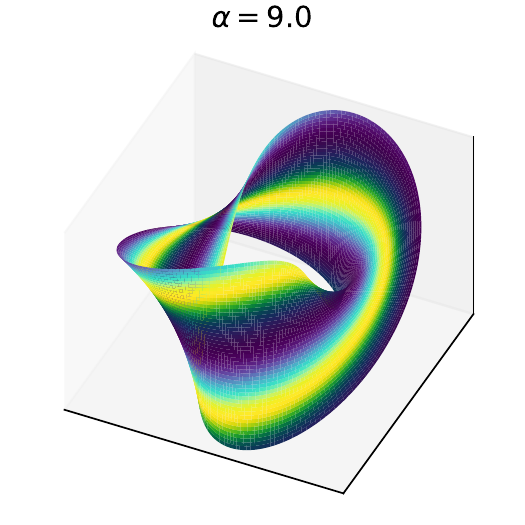}
        \caption{Superlevel sets of the probability density function $\fa$ on $\X$ for $\alpha \in {2.0, -6.5, 9.0}$.}
        \label{fig:mobius-appendix-b}
      \end{subfigure}
      \hspace*{-1em}
      \begin{subfigure}[t]{0.01\linewidth}
        \includegraphics[width=4\linewidth]{\Root/inputs/plots/mobius_cb.pdf}
      \end{subfigure}

      \caption{Illustration of the family of distributions from Example~\ref{ex:mobius}.}
      \label{fig:mobius-appendix}
    \end{figure}

    The space $\X$ is the surface of a Möbius band with a full twist, $\X$, given by
    \eq{
    \X = \qty\Big{ \xv(\theta, t) \in \R^3: \theta \in [0, 2\pi), \ t \in [-0.5, 0.5]},\nn
    }
    where
    \eq{
      \xv(\theta, t) = \qty\Big(\cos\theta(1 + t\cos\theta), \  \sin\theta(1 + t \cos\theta), \ t\sin\theta).\nn
    }
    This can be represented as a fiber bundle $\e = (\X, \z, \Y, \pi, G)$, with the base space $\z = \mathcal{S}^1$, the typical fiber $\Y = [-0.5, 0.5]$, and the projection map given by $\pi(\xv) = \arctan({x_2/x_1})$ by identifying $\z \simeq [0, 2\pi]/\sim$ where $\pb{0} \sim \pb{2\pi}$.

    Consider the sets $V_1 = (0, 2\pi) \setminus \pb{\pi}$, and $V_2 = [0, \epsilon) \cup (\pi - \epsilon, \pi + \epsilon)$ for $0 < \epsilon < \pi/2$, and for $\theta = \pi(\xv) \in [0, 2\pi)$
    \eq{
      \psi\inv_{1, \theta}(\xv) = \frac{x_3}{\sin \theta}, \qq{and} \psi\inv_{2, \theta}(\xv) = \frac{1}{\cos \theta}\qty(\frac{x_1}{\cos \theta} - 1),\nn
    }
    The collection $\pb{(V_j, \psi_j): j = 1, 2}$ provides a local trivialization for $\X$. Additionally, the structure group $G = \pb{\id}$ is simply the identity element since $\psi\inv_{i, \theta} \circ \psi_{j, \theta} = \id$ for all $\theta \in V_i \cap V_j,$ and  $i, j  \in \qty{1, 2}$. It is straightforward to verify that for $\theta \in V_j \subset [0, 2\pi)$, $t \in [-0.5, 0.5]$, and $\xv = \xv(\theta, t)$, it follows that $\pi(\xv) = \theta$ and $\psi\inv_{j, \theta}(\xv) = t$. 

    Let $\nu \sim \textup{Unif}([-0.5, 0.5])$ be the uniform distribution on $\Y = [-0.5, 0.5]$ with $d\nu(t) = 1 \cdot dt$ for all $t \in [-0.5, 0.5]$. For a fixed $\theta \in V_j$, the map $\psi_{j, \theta}$ pushes forward the measure $\nu_\theta$ to the fiber $\pi\inv(\theta)$. The image $\psi_{j, \theta}\qty(\qty[-0.5, 0.5])$ is a line segment in $\R^3$ of length 
    $$
      \norm{\psi_{j, \theta}(0.5) - \psi_{j, \theta}(-0.5)} = \norm{ (\cos^2\theta, \sin\theta\cos\theta, \sin\theta) } = 1,
    $$
    as shown in Figure~\ref{fig:mobius-appendix-a}. Therefore, the pushforward measure, $\nu_\theta = (\psi_{j, \theta})_\# \nu$, is also a uniform distribution with $d\nu_\theta(\xv) = 1$ for all $\xv \in \pi\inv(\theta)$. Hence, the density $\fa$ in \eref{eq:fa-mobius} can be faithfully represented as
    \eq{
      \fa(\xv) = g\qty( \phi_\alpha(\psi\inv_{j, \theta}(\xv), \theta) ) d\nu_\theta(\xv).\nn
    }
    The \Fequivalence{} of $\pb{\fa: \alpha \in \R}$ now follows from Example~\ref{ex:mobius}.
  \end{example}

  \begin{figure}
    \centering
    \begin{subfigure}[c]{0.95\linewidth}
      \includegraphics[width=0.48\linewidth]{\Root/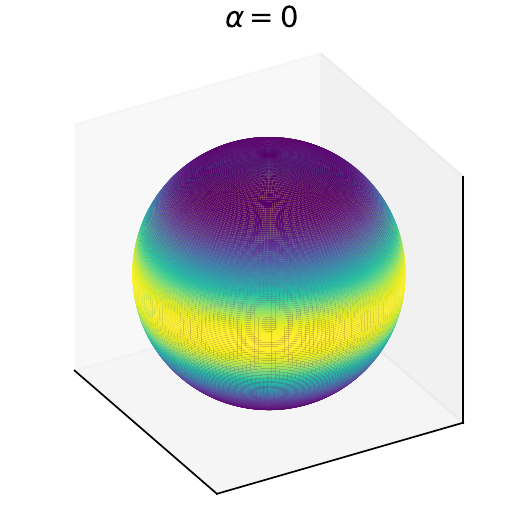}
      \includegraphics[width=0.48\linewidth]{\Root/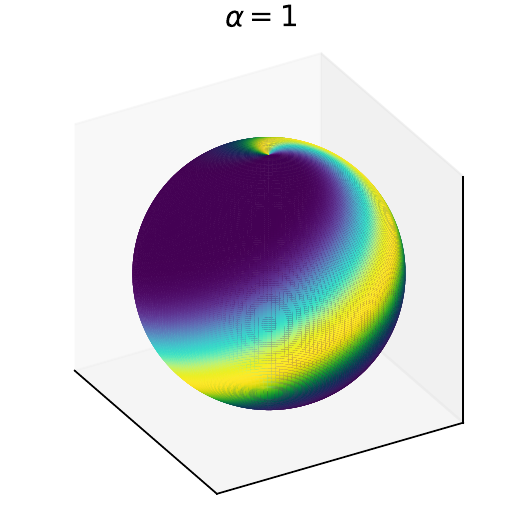}

      \includegraphics[width=0.48\linewidth]{\Root/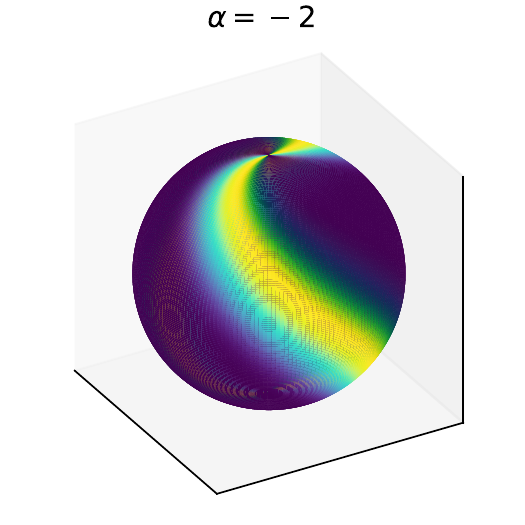}
      \includegraphics[width=0.48\linewidth]{\Root/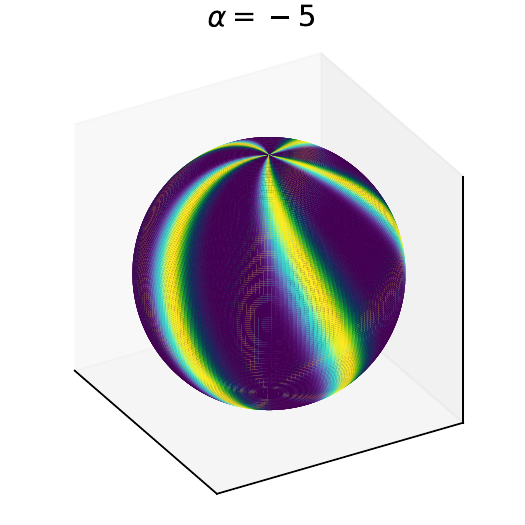}
    \end{subfigure}
    \hspace*{-3em}
    \begin{subfigure}[c]{0.03\linewidth}
      \includegraphics[height=20\linewidth]{\Root/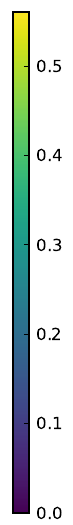}
    \end{subfigure}
    \caption{Superlevel sets of the density function $\fa$ on $\X=\S^2 \setminus \pb{\pv, -\pv}$ from Example~\ref{ex:sphere} when $g \sim \textup{Beta}(10, 10)$ is the density function of a Beta distribution on $(0, 1)$, and for $\alpha\in \pb{0, 1, -2, -5}$}
    \label{fig:sphere}
  \end{figure}

  \begin{example}[Family of distributions on $\S^2$]
    \label{ex:sphere}
    
    Let $\X = \S^2 \setminus \pb{+\pv, -\pv}$ be the surface of the unit sphere embedded in $\R^3$, excluding the two poles. This can be represented in polar coordinates as
    \eq{
    \X = \qty\Big{ \xv(\theta, \varphi) = \qty(\sin \theta \cos \varphi, \sin \theta \sin \varphi, \cos \theta) \in \R^3 : \theta \in (0, \pi), \ \ {and}\ \ \varphi \in [0, 2\pi) }.\nn
    }
    Let $\fa$ be a probability density function on $\X$ given by
    \eq{\label{eq:fa-sphere}
      \fa(\xv) = \fa\qty\big(\xv(\varphi, \theta)) = \frac{1}{2\pi} \cdot g\qty(\frac{\theta + \alpha \varphi/2 \mod \pi}{\pi}),
    }
    where $g$ is a fixed probability density function with $\supp(g) = (0, 1)$ w.r.t. the Lebesgue measure. The family $\pb{\fa: \alpha \in \Z}$ admits \Fequivalence{}. 
    
    This is similar to Example~\ref{ex:mobius}, and can be understood by considering the fiber bundle representation of $\X$ with $\z = (0, \pi)$, $\Y = \S^1 \simeq [0, 2\pi]/\sim$ and the projection map $\pi(\xv) = \arctan(x_2/x_1)$. The local trivialization is given by $\pb{(V, \psi)}$, where 
    \eq{
      V = (0, \pi), \ \psi\inv(\xv) = \qty\Big(\arctan\qty({x_2}/{x_1}), \arccos\qty(x_3)), \text{ and } \psi\inv_\theta(\xv) = \arccos(x_3).\nonumber 
    }
    The uniform measure on $\Y$ is $d\nu(\varphi) = 1/2\pi$, and for $\xv \in \X$ with $\theta = \pi(\xv)$ the density in \eref{eq:fa-sphere} can be rewritten as 
    \eq{
      \fa(\xv) = C \cdot g\qty( \phi\qty(\psi\inv_{\theta}(\xv), \theta) ),\nn
    }
    where $C = 1/2\pi$, and the map $\phi(\varphi, \theta) = \theta + \alpha \varphi / 2$ satisfies $\phi(\varphi, \cdot) \in \Delta\qty((0, \pi))$ for all $\varphi \in [0, 2\pi)$. Since $\S^1 \simeq [0, 2\pi]/\sim$ requires $0 \sim 2\pi$, we have 
    \eq{
      \phi(0, \theta) = \phi(2\pi, \theta) \quad \Longleftrightarrow \quad \pi\alpha \mod \pi = 0, \nn
    }
    which holds whenever $\alpha \in \Z$.

    As $g$ is a density with respect to the Lebesgue measure, the modular character is the identity map, and the Jacobian constraint in \eref{eq:jac_constraint}, 
    \eq{ 
      \int_{0}^{2\pi} C \cdot \absdetj{\phi\inv(\varphi, \cdot)} d\nu(\varphi) = \int_{0}^{2\pi} \abs{\frac{d}{d\theta}(\theta - \alpha \varphi/2 \mod \pi)} \frac{1}{2\pi}d\varphi = \int_{0}^{2\pi} 1 \cdot \frac{1}{2\pi}d\varphi = 1, \nn
    } 
    is satisfied for all $\alpha \in \Z$. From Theorem~\ref{thm:nonlinear_invariance}, we conclude that the family of distributions $\pb{\fa: \alpha \in \Z}$ admits \Fequivalence{}. 

    Furthermore, according to Remark~\ref{remark:nonlinear}\ref{remark:measure-zero}, this family of distributions can be continuously extended to a family of distributions supported on $\S^2$. Figure~\ref{fig:sphere} illustrates the superlevel sets $\pb{\xv \in \X: \fa(\xv) \ge t}$ for four different values of $\alpha$ when $g \sim \textup{Beta}(10, 10)$ is the density function of a Beta distribution on $(0, 1)$.
  \end{example}


}%
\bibliographystyle{plainnat}
\bibliography{\Root/main}
\endgroup
\end{document}